\theoremstyle{plain} 
\newtheorem{theorem}{Theorem}[section]
\newtheorem{lemma}[theorem]{Lemma}
\newtheorem{remark}[theorem]{Remark}
\theoremstyle{definition} %
\theoremstyle{remark} %
\title{IETI-DP methods for discontinuous Galerkin 
	multi-patch Isogeometric Analysis with T-junctions
	}
\author{Rainer Schneckenleitner\footnote{\texttt{schneckenleitner@numa.uni-linz.ac.at},
		Institute of Computational Mathematics, Johannes
		Kepler University Linz, Austria}\; and Stefan Takacs\footnote{\texttt{stefan.takacs@ricam.oeaw.ac.at},
		Johann Radon Institute Institute for Computational and Applied Mathematics, Austrian Academy of Sciences, Linz, Austria}}
\date{}
\begin{document}
\maketitle
%
%
\selectlanguage{english}
\begin{abstract}
	We study Dual-Primal Isogeometric Tearing and Interconnecting
	(IETI-DP) solvers for non-conforming multi-patch discretizations
	of a generalized Poisson problem. We realize the coupling between
	the patches using a symmetric interior penalty discontinuous Galerkin (SIPG)
	approach. Previously, we have assumed that the interfaces between
	patches always consist of whole edges.
	In this paper, we drop this requirement and allow T-junctions. This
	extension is vital for the consideration of sliding interfaces,
	for example between the rotor and the stator of an electrical
	motor. One critical part for the handling of T-junctions
	in IETI-DP solvers is the choice of the primal degrees of freedom.
	We propose to add all basis functions that are non-zero at any
	of the vertices to the primal space. Since there are several
	such basis functions at any T-junction, we call this concept
	``fat vertices''. For this choice, we show a condition number
	bound that coincides with the bound for the conforming case.
\end{abstract}
%
%
\section{Introduction}
\label{sec:1}
%
%
%

Isogeometric Analysis (IgA),~\cite{HughesCottrellBazilevs:2005}, is
an approach to discretize partial differential equations (PDEs)
that has been designed in order to overcome difficulties related
to meshing of the computational domain. In IgA, the
computational domain is parameterized by geometry functions, which
are commonly represented in terms of B-splines or
non-uniform rational B-splines (NURBS). Such a representation is
also used in state-of-the-art computer aided-design (CAD) software. Usually, one considers multiple patches, each parameterized with its
own geometry function (multi-patch IgA). We consider
the case of non-overlapping patches.

Conforming discretizations require that both the geometry functions and the grids agree on each interface between two patches.
If this is not the case, discontinuous Galerkin (dG) methods,
particularly the
symmetric interior penalty discontinuous Galerkin (SIPG) approach~\cite{Arnold:1982},
are an appropriate option. For its adaptation to IgA, see~\cite{Hofer:2016a,HoferLanger:2017c,HoferLanger:2019b,SchneckenleitnerTakacs:2020}
and others. In these publications, it is assumed that the interfaces between two patches consist (in the two-dimensional case) of whole edges,
which excludes the case of T-junctions between patches. Now,
we include the case of T-junctions, which allows greater flexibility
for the geometry modeling. This is of vital interest for the simulation
of objects with sliding interfaces, like the interface between the rotor
and the stator of an electrical motor, which serves as computational domain in our model problem. The PDE in the model problem is the Poisson equation, which can be motivated as a model for the magnetostatic potential.
In general, sliding interfaces lead to a non-matching
decomposition of the computational domain into patches. In two dimensions, this means that T-junctions between patches occur for most rotational angles.

So far, several approaches have been considered to handle such types of problems, including the more classical locked-step methods, cf.~\cite{PrestonReece:1988}, the moving band technique, cf.~\cite{DavatRen:1985}, the Lagrange multiplier method, cf.~\cite{LaiRodger:1992}, interpolation approaches, cf.~\cite{PerrinCoulomb:1995}. More recently, mortar techniques, cf.~\cite{BuffaMaday:2001,Egger:Harutyunyan:Merkel:Schops:2020}, domain interface methods, cf.~\cite{CafieroLloberas:2016} or discontinuous Galerkin methods, cf.~\cite{AlottoBertoni:2001} have been considered. Also combinations of different approaches have been proposed, see, e.g.,~\cite{KettunenKurz:2014}.

We focus on the SIPG approach, the contribution of this paper is a
fast solver for the linear system obtained from the proposed discretization.
A canonical choice for domains with many non-overlapping patches are domain decomposition (DD) solvers, like the Dual-Primal Finite Element Tearing and Interconnecting (FETI-DP) method, cf.~\cite{FarhatLesoinneLeTallecPiersonRixen:2001a,FarhatRoux:1991a}. These solvers have been adapted to IgA in~\cite{KleissPechsteinJuttlerTomar:2012} and are sometimes referred to as Dual-Primal Isogeometric Tearing and Interconnecting (IETI-DP) solvers. Recently, the authors have
developed an analysis that is also explicit in the spline degree,
see~\cite{SchneckenleitnerTakacs:2019}.

The IETI-DP solvers have been extended to discontinuous Galerkin
discretizations in~\cite{Hofer:2016a,HoferLanger:2016a,HoferLanger:2017c,SchneckenleitnerTakacs:2020}, however
T-junctions have not been covered by the analysis so far. Most components
of the IETI-DP framework can easily be extended to domains with T-junctions, see~\cite{SchneckenleitnerTakacs:2021} for a numerical
study. 
One of the critical questions is the choice of the primal degrees of
freedom. We propose to add all basis functions that are non-zero at
a vertex to the primal space. This yields a number of basis functions
for each T-junction that grows linearly with the spline degree
(fat vertices). If a vertex is the corner of the respective patch, there
is only one single non-zero basis function. This means that
our choice coincides with the standard choice of corner values.
We introduce
a scaled Dirichlet preconditioner for the Schur complement formulation
of the IETI-DP system. We show that the condition number of the preconditioned system is bounded by 
\[
C p \left( 1+\log p + \max_{k=1,\dots,K} \log \frac{H_k}{h_k} \right)^2,
\]
where the constant $C>0$ is independent of the grid sizes $h_k$, the patch sizes $H_k$, the spline degree $p$, the smoothness of the splines within the patches $\Omega^{(k)}$, and coefficient jumps between the patches.
$C$ depends on the geometry functions, the maximum number of
patches that meet on any vertex, the minimal interface length and the quasi-uniformity of the grids
within each patch.

The remainder of the paper is structured as follows. We introduce the
model problem in Section~\ref{sec:2} and its SIPG discretization in Section~\ref{sec:3}. In  Section~\ref{sec:4}, we propose the IETI-DP solver. Numerical examples are presented in the subsequent Section~\ref{sec:5}. The paper is concluded with some final remarks in Section~\ref{sec:6}. The proof of the condition number bound is given in an Appendix.

\section{The model problem}
\label{sec:2}
%
%
%

In this section, we introduce the model problem which we consider
in this paper. We use the same notation as in~\cite{SchneckenleitnerTakacs:2020}. To keep the paper self-contained, we introduce the notation in
the following.

First, we introduce the computational domain. $\Omega \subset \mathbb{R}^2$
is a simply connected and bounded open Lipschitz domain, which
is composed of $K$ non-overlapping, simply connected open
patches $\Omega^{(k)}$, i.e.,
\begin{align*}
	\overline{\Omega} = \bigcup_{k=1}^K \overline{\Omega^{(k)}} \quad \text{and}\quad
	\Omega^{(k)} \cap \Omega^{(\ell)} = \emptyset \quad\text{for all}\quad k \neq \ell,
\end{align*}
where $\overline{T}$ denotes the closure of the set $T$. 
We assume that every patch $\Omega^{(k)}$ is parameterized by a geometry function
\begin{align}
	G_k:\widehat{\Omega}:=(0,1)^2 \rightarrow \Omega^{(k)}:=G_k(\widehat{\Omega}) \subset \mathbb{R}^2, 
\end{align}
that has a continuous extension to the closure of $\widehat\Omega$.
In IgA, the geometry functions $G_k$ are commonly represented in terms of
B-splines or NURBS. For the presented analysis, it is not necessary
to restrict ourselves to these representations, as long as the Jacobian
of $G_k$ and its inverse are uniformly bounded.

We use the common notation for the Lebesgue and Sobolev spaces $L_2(\Omega)$ and $H^s(\Omega)$, $s \in \mathbb{R}$, respectively. Those function spaces are equipped with the standard norms
$\|\cdot\|_{L_2(\Omega)}$ and $\|\cdot\|_{H^s(\Omega)}$
and seminorms
$|\cdot|_{H^s(\Omega)}$. As usual, $H^1_0(\Omega) \subset H^1(\Omega)$ denotes the subspace of functions vanishing on $\partial \Omega$.

The boundary value problem of interest reads as follows.
Find $u \in H^1_0(\Omega)$ such that 
\begin{align}
	\label{continousProb}
	\int_{\Omega}^{} \alpha \nabla u \cdot \nabla v \; \mathrm{d}x = \int_{\Omega}^{} f v \; \mathrm{d}x \qquad\text{for all}\qquad v \in H^1_0(\Omega),
\end{align}
with a given source function $f \in H^{-1}(\Omega)$ and a uniformly positive and bounded diffusion coefficient $\alpha$, which is constant
on each patch, i.e., we have
\[
\alpha(x) = \alpha_k
\quad \mbox{for all}\quad
x\in \Omega^{(k)}
\]
with $\alpha_k>0$ for all $k=1,\ldots,K$. Since, for simplicity,
we represent the Dirichlet boundary conditions in a strong sense,
we assume that the pre-images $\widehat \Gamma_D^{(k)}
:= G_k^{-1}(\partial\Omega\cap\partial\Omega^{(k)})$ of the Dirichlet boundary
$\Gamma_D=\partial \Omega$ consist of whole edges of the parameter
domain $\widehat \Omega$. An alternative, where this restriction would
not be necessary, would be a fully floating IETI-DP discretization.

\section{The discretization using IgA and SIPG}
\label{sec:3}
%
%
%

In this section, we first introduce the patch-local discretization
spaces, then we discuss the overall discretization. For the local
discretization spaces, we restrict ourselves for simplicity to
B-splines. Let $p \in \mathbb{N}:=\{ 1, 2, 3, \dots \}$ be the spline
degree, where we assume for simplicity that the degree is uniform
for all patches. For each patch $k\in \{1,\ldots,K\}$ and each spatial
dimension $\delta \in \{1,2\}$, we introduce a $p$-open knot vector
\[
\Xi^{(k,\delta)}=(\xi_1^{(k,\delta)},\ldots,\xi_{n^{(k,\delta)}+p+1}^{(k,\delta)})
\]
with $\xi_1^{(k,\delta)}=\dots=\xi_{p+1}^{(k,\delta)} = 0$ and $\xi_{n^{(k,\delta)}}^{(k,\delta)} = \dots = \xi_{n^{(k,\delta)}+p+1}^{(k,\delta)}= 1$,
where each inner knot might be repeated up to $p$ times. Depending on the
$p$-open knot vector, we introduce a B-spline basis
$(B[p,\Xi^{(k,\delta)},i])_{i=1}^{n^{(k,\delta)}}$ via the Cox-de~Boor formula, cf.~\cite[Eq.~(2.1) and (2.2)]{Cottrell:Hughes:Bazilevs}.
The collection $(B[p,\Xi^{(k,\delta)},i])_{i=1}^{n^{(k,\delta)}}$ spans the univariate B-spline discretization space
\[
S[p,\Xi^{(k,\delta)}]:=\text{span}\{B^{(k,\delta)}[p,\Xi,1],\ldots, B^{(k,\delta)}[p,\Xi,n^{(k,\delta)}] \}.
\]
\begin{figure}[h]
	\begin{center}
		\includegraphics[trim=0 20px 0 0]{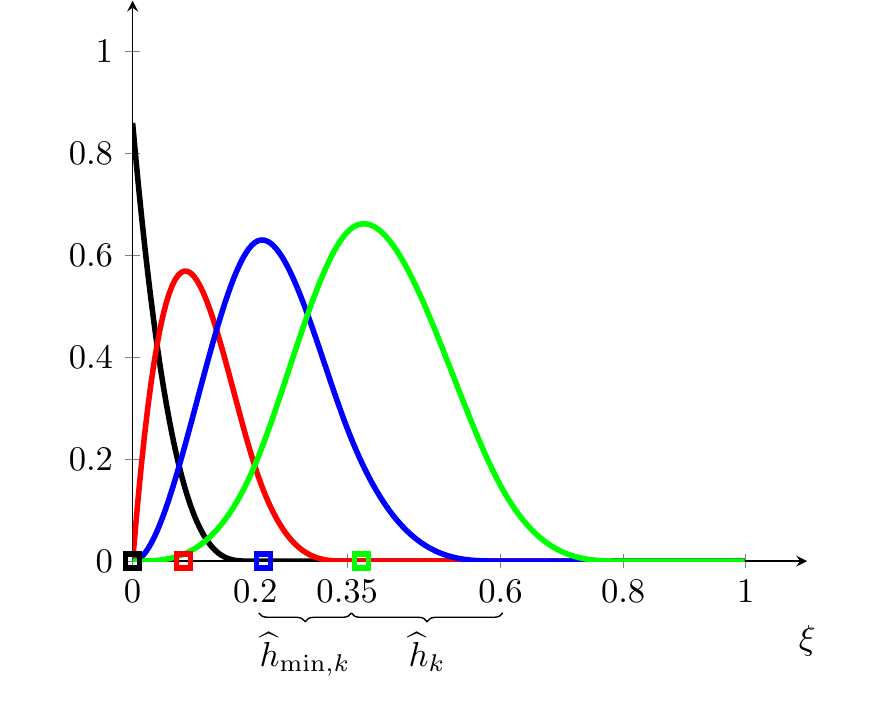}
	\end{center}
	\caption{Basis functions in univariate case\label{fig:basis1d}}
\end{figure}
We use the standard tensor-product B-spline space $\widehat{V}^{(k)}$ over $\widehat \Omega$, which is obtained from the tensor-product space of the two univariate spline spaces. The corresponding physical space  ${V}^{(k)}$ of $\widehat{V}^{(k)}$ is defined by the pull-back principle, i.e., 
\begin{equation}\label{eq:vkdef}
	\widehat{V}^{(k)}
	:= \{ v\in S[p,\Xi^{(k,1)}] \otimes S[p,\Xi^{(k,2)}]
	\;:\;
	v|_{\widehat \Gamma_D^{(k)}} = 0 \}	
	\quad\mbox{and}\quad
	V^{(k)} := \widehat{V}^{(k)} \circ G_k^{-1},
\end{equation}
where $v|_T$ denotes the restriction of $v$ to $T$ (trace operator).
The grid size $\widehat{h}_{k}$ on the parameter domain
and ${h}_{k}$ on the physical domain are defined by
\[
\widehat{h}_{k} :=
\max_{\delta=1,2}
\max
\{
\xi_{i+1}^{(k,\delta)} - \xi_i^{(k,\delta)}
\;:\;
i=1,\ldots,n^{(k,\delta)}+p
\}
\quad\mbox{and}\quad
h_{k}:=\widehat{h}_{k} \; \mbox{diam}(\Omega^{(k)}).
\]
Moreover, $\widehat{h}_{\mathrm{min},k}$ denotes the smallest knot
span, i.e., we define
\[
\widehat{h}_{\mathrm{min},k} :=
\min_{\delta=1,2}
\min
\{
\xi_{i+1}^{(k,\delta)} - \xi_i^{(k,\delta)}
\;:\;
i=1,\ldots,n^{(k,\delta)}+p
\;\mbox{where}\;
\xi_{i+1}^{(k,\delta)} \not= \xi_i^{(k,\delta)}
\}.
\]
Figure~\ref{fig:basis1d} shows the basis functions for the univariate
case. Here and in what follows, we identify each basis
function with its Greville point. We observe that there is only one
active basis function on each end point of the interval; its 
Greville point is located on that end point.

\begin{figure}
	\begin{center}
		\begin{tikzpicture}
			\def\shift{0.5}
			\fill[gray!20] (-0.2,0) -- (4.5+\shift,0) -- (4.5+\shift,1.7) -- (-0.2,1.7);
			\fill[gray!20] (-0.2,-1.5-\shift) -- (1.5,-1.5-\shift) -- (1.5,-3.2-\shift) -- (-0.2,-3.2-\shift);
			\fill[gray!20] (4.5+\shift,-1.5-\shift) -- (3.0+\shift,-1.5-\shift) -- (3.0+\shift,-3.2-\shift) -- (4.5+\shift,-3.2-\shift);
			\fill[gray!20] (5.7+\shift,1.7) -- (7.4+\shift,1.7) -- (7.4+\shift,0) -- (5.7+\shift,0);
			\fill[gray!20] (5.7+\shift,-3.2-\shift) -- (5.7+\shift,-1.5-\shift) -- (7.4+\shift,-1.5-\shift) -- (7.4+\shift,-3.2-\shift);
			
			\draw (-0.2,0) -- (4.5+\shift,0) -- (4.5+\shift,1.7) node at (2.5,2) {$\Omega^{(1)}$};
			\draw (-0.2,-1.5-\shift) -- (1.5, -1.5-\shift) -- (1.5,-3.2-\shift) node at (0.25,-3.5-\shift) {$\Omega^{(2)}$};
			\draw (3.0+\shift,-3.2-\shift) -- (3.0+\shift,-1.5-\shift) -- (4.5+\shift,-1.5-\shift) -- (4.5+\shift,-3.2-\shift) node at (4.75+\shift,-3.5-\shift) {};
			\node at (3.8,-3.5-\shift) {$\Omega^{(3)}$};
			\draw (5.7+\shift,1.7) -- (5.7+\shift,0) -- (7.4+\shift,0) node at (6.5+\shift,2) {$\Omega^{(4)}$};
			\draw (7.4+\shift,-1.5-\shift) -- (5.7+\shift,-1.5-\shift) -- (5.7+\shift,-3.2-\shift) node at (6.5+\shift,-3.5-\shift) {$\Omega^{(5)}$};
			
			\node at (-0.7,1.3) {$V^{(1)}$};		
			\node at (-0.7,-2.8-\shift) {$V^{(2)}$};
			\node at (4.8,-3.5-\shift) {$V^{(3)}$};
			\node at (8.4,1.3) {$V^{(4)}$};
			\node at (8.4,-2.8-\shift) {$V^{(5)}$};
			
			\draw (1.85,0) node[circle, fill, inner sep = 2.5pt] {};
			\draw (1.85,1.0) node[circle, fill, inner sep = 2.5pt] {};
			\draw (0.25,0.0) node[circle, fill, inner sep = 2.5pt] {};
			\draw (0.25,1.0) node[circle, fill, inner sep = 2.5pt] {};
			\draw (3.35,0.0) node[circle, fill, inner sep = 2.5pt] {};
			\draw (3.35,1.0) node[circle, fill, inner sep = 2.5pt] {};
			\draw (5.0,0.0) node[circle, fill, inner sep = 2.5pt] {};
			\draw (5.0,1.0) node[circle, fill, inner sep = 2.5pt] {};
			
			\draw (1.5,-1.5-\shift) node[diamond, fill, inner sep = 2pt] {};
			\draw (1.5,-2.2-\shift) node[diamond, fill, inner sep = 2pt] {};
			\draw (1.5,-3.0-\shift) node[diamond, fill, inner sep = 2pt] {};
			\draw (0.75,-1.5-\shift) node[diamond, fill, inner sep = 2pt] {};
			\draw (0.75,-2.2-\shift) node[diamond, fill, inner sep = 2pt] {};
			\draw (0.75,-3.0-\shift) node[diamond, fill, inner sep = 2pt] {};
			
			\draw (3.0+\shift,-1.5-\shift) node[star, fill, inner sep = 2pt] {};
			\draw (3.0+\shift,-2.0-\shift) node[star, fill, inner sep = 2pt] {};
			\draw (3.0+\shift,-2.75-\shift) node[star, fill, inner sep = 2pt] {};
			\draw (3.75+\shift,-1.5-\shift) node[star, fill, inner sep = 2pt] {};
			\draw (3.75+\shift,-2.0-\shift) node[star, fill, inner sep = 2pt] {};
			\draw (3.75+\shift,-2.75-\shift) node[star, fill, inner sep = 2pt] {};
			\draw (4.5+\shift,-1.5-\shift) node[star, fill, inner sep = 2pt] {};
			\draw (4.5+\shift,-2.0-\shift) node[star, fill, inner sep = 2pt] {};
			\draw (4.5+\shift,-2.75-\shift) node[star, fill, inner sep = 2pt] {};
			
			\draw (5.7+\shift,0) node[rectangle, fill, inner sep = 2pt] {};
			\draw (6.2+\shift,0) node[rectangle, fill, inner sep = 2pt] {};
			\draw (5.7+\shift,0.75) node[rectangle, fill, inner sep = 2pt] {};
			\draw (6.2+\shift,0.75) node[rectangle, fill, inner sep = 2pt] {};
			\draw (5.7+\shift,1.4) node[rectangle, fill, inner sep = 2pt] {};
			\draw (6.2+\shift,1.4) node[rectangle, fill, inner sep = 2pt] {};
			\draw (7.0+\shift,0) node[rectangle, fill, inner sep = 2pt] {};
			\draw (7.0+\shift,0.75) node[rectangle, fill, inner sep = 2pt] {};
			\draw (7.0+\shift,1.4) node[rectangle, fill, inner sep = 2pt] {};
			
			\draw (5.7+\shift,-1.5-\shift) node[regular polygon,regular polygon sides=3, fill, inner sep = 1.5pt] {};
			\draw (7.0+\shift,-1.5-\shift) node[regular polygon,regular polygon sides=3, fill, inner sep = 1.5pt] {};
			\draw (5.7+\shift,-3.-\shift) node[regular polygon,regular polygon sides=3, fill, inner sep = 1.5pt] {};
			\draw (7.0+\shift,-3.-\shift) node[regular polygon,regular polygon sides=3, fill, inner sep = 1.5pt] {};
			
		\end{tikzpicture}
		\captionof{figure}{Schematic representation of local spaces\label{fig:local}}
	\end{center}
\end{figure}
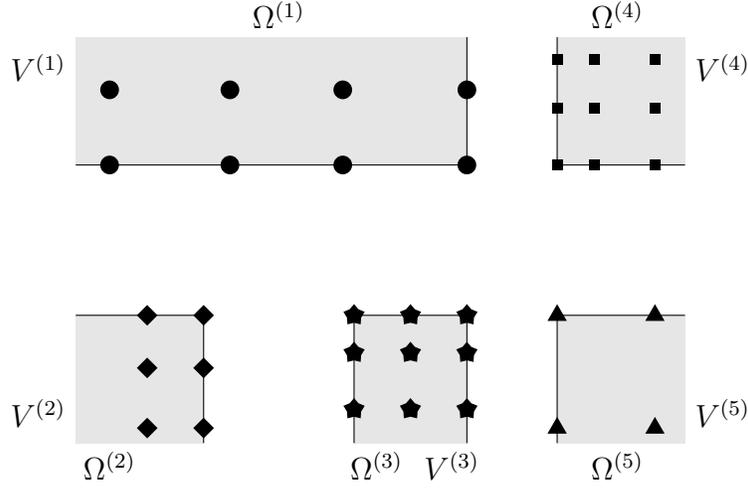
Consequently, the standard tensor-product basis is represented as
a grid of Greville points, cf. Figure~\ref{fig:local}, where an
example with five patches is depicted. Note that, in the physical domain, the patches adjoin directly. Since we employ a discontinuous Galerkin
discretization, the basis functions at the interfaces do not agree.
Therefore, we separate the patches visually. The patches $\Omega^{(1)}$,
$\Omega^{(3)}$, $\Omega^{(4)}$ and $\Omega^{(5)}$ meet in a regular
corner. Certainly, it is also possible that only three or more than
four patches meet in a regular corner.
Such junctions have been previously considered. Here, we additionally allow T-junctions, like the junction
between the patches $\Omega^{(1)}$, $\Omega^{(2)}$ and $\Omega^{(3)}$.
Certainly, it is possible that more than three patches meet in a T-junction. Note that in any case, the T-junction constitutes a corner
of every involved patch but one patch, like patch $\Omega^{(1)}$ in
the example.

Having all patchwise discretization spaces defined, we obtain the global approximation space by
\begin{equation}\label{eq:vdef}
	V := V^{(1)} \times \dots \times V^{(K)}.
\end{equation}
On this discontinuous discretization space, we introduce
a variational formulation of the model problem~\eqref{continousProb}
following the symmetric interior penalty discontinuous
Galerkin (SIPG) method. Find $u = (u^{(1)},\cdots, u^{(K)})
\in V$ such that
\begin{align}
	\label{discreteVarProb}
	a_h(u,v)=\langle f,v\rangle
	\quad \mbox{for all}\quad v \in V,
\end{align}
where 
\begin{align*}
	a_h(u,v) &:= \sum_{k=1}^{K} \left( a^{(k)}(u,v) + m^{(k)}(u,v) + r^{(k)}(u,v) \right), \\
	a^{(k)}(u,v) &:= \int_{\Omega^{(k)}} \alpha_k \nabla u^{(k)} \cdot \nabla v^{(k)} \; \textrm{d}x, \\
	m^{(k)}(u,v) &:= \sum_{\ell \in \mathcal{N}_\Gamma(k)} \int_{\Gamma^{(k,\ell)}} \frac{\alpha_k}{2} 
	\left( 
	\frac{\partial u^{(k)}}{\partial n_k}(v^{(\ell)} - v^{(k)}) + 
	\frac{\partial v^{(k)}}{\partial n_k}(u^{(\ell)} - u^{(k)})
	\right) \; \textrm{d}s, \\
	r^{(k)}(u,v) &:= \sum_{\ell \in \mathcal{N}_\Gamma(k)} \int_{\Gamma^{(k,\ell)}} \alpha_k \frac{\delta p^2}{\min\{h_k,h_\ell\}}   
	(u^{(\ell)} - u^{(k)})(v^{(\ell)} - v^{(k)}) \; \textrm{d}s,\\
	\langle {f}, v \rangle
	&:=  
	\sum_{k=1}^{K} \int_{\Omega^{(k)}} f v^{(k)} \; \textrm{d}x,
\end{align*}
and $\delta > 0$ is some suitably chosen penalty parameter and $n_k$ is the unit normal vector pointing outwards of the patch $\Omega^{(k)}$
and $\Gamma^{(k,\ell)}:= \partial \Omega^{(k)}\cap \partial \Omega^{(\ell)}$ is the interface between the two patches. $\mathcal{N}_\Gamma(k)$ contains the indices of the neighboring patches $\Omega^{(\ell)}$, sharing with $\Omega^{(k)}$ more than just a corner.

The penalty parameter $\delta$ ensures that the bilinear form $a_h(\cdot,\cdot)$ is bounded and coercive in the dG-norm
\[
\| v \|^2_{d} := d(v, v), \quad \text{ where } \quad
d(u,v) := \sum_{k=1}^{K} \left( a^{(k)}(u,v) + r^{(k)}(u,v) \right).
\]
A suitable $\delta$ can always be chosen, independently of the spline degree $p$ and grid sizes $h_k$, however it might depend on the
geometry functions and on the quasi-uniformity of the
grids, i.e., the ratios $\widehat h_k/\widehat h_{\mathrm{min},k}$,
see \cite[Theorem 8]{Takacs:2019b}. The Theorem of Lax-Milgram
guarantees the existence and the uniqueness of a solution to~\eqref{discreteVarProb}. If the solution $u$ of the continuous problem
is sufficiently smooth, the solution of \eqref{discreteVarProb} is
an approximation to the solution of the original problem \eqref{continousProb}, cf.~\cite[Theorems 12 and 13]{Takacs:2019b}.

\section{The dG IETI-DP solver}
\label{sec:4}
%
%
%

In this section, we introduce a IETI-DP solver for the discontinuous Galerkin discretization~\eqref{discreteVarProb}. The first step is the
introduction of the local subspaces required for the domain decomposition
method and the local assembling of the problem,
see Subsection~\ref{subs:4:1}. In Subsection~\ref{subs:4:2},
we discuss the introduction of the primal degrees of freedom. Then,
in Subsection~\ref{subs:4:3}, we discuss the coupling of the remaining
degrees of freedom using Lagrange multipliers. The setup
of the IETI system is discussed in Subsection~\ref{subs:4:4}, its solution
is discussed in Subsection~\ref{subs:4:5}, and the corresponding
convergence result is stated in Subsection~\ref{subs:4:6}.

\subsection{Local subspaces and local problem}\label{subs:4:1}

As it has been done in the seminal paper~\cite{KleissPechsteinJuttlerTomar:2012} and in follow-up publications
on IETI-DP, the local spaces are constructed on a per-patch basis. For
variational problems that are discretized using dG approaches, the setup
of local spaces is not obvious. We follow the approach that 
has been first introduced in~\cite{DryjaGalvis:2013} and then adapted to IgA in~\cite{HoferLanger:2016a,Hofer:2016a,SchneckenleitnerTakacs:2020,SchneckenleitnerTakacs:2021}:
The introduction of artificial interfaces.

The local function space $V^{(k)}_e$ for a patch $\Omega^{(k)}$ is
composed of the original function space $V^{(k)}$ of the patch and
of the traces of the function spaces $V^{(\ell)}$, when restricted
to the common interface $\Gamma^{(k,\ell)}$. Formally, we have
\begin{align*}
	V_{e}^{(k)}:= V^{(k)} \times \prod_{\ell \in \mathcal{N}_\Gamma(k)} V^{(k, \ell)},
	\quad \mbox{where}\quad
	V^{(k,\ell)} := \{ v^{(\ell)}|_{\Gamma^{(k,\ell)}} \;:\; v^{(\ell)}\in V^{(\ell)}\}.
\end{align*}
A local function $v_e^{(k)} \in V_e^{(k)}$ is represented as a tuple
\begin{align}
	\label{def:representation}
	v_e^{(k)} = 
	\left(
	v^{(k)}, (v^{(k,\ell)})_{\ell\in \mathcal{N}_\Gamma(k)}
	\right),
	\quad\mbox{where}\quad
	v^{(k)} \in V^{(k)}
	\mbox{ and }
	v^{(k,\ell)} \in V^{(k,\ell)}.
\end{align}

\begin{figure}
	\begin{center}
		\begin{tikzpicture}
			\def\shift{0.5}
			\fill[gray!20] (-0.2,0) -- (4.5+\shift,0) -- (4.5+\shift,1.7) -- (-0.2,1.7);
			\fill[gray!20] (-0.2,-1.5-\shift) -- (1.5,-1.5-\shift) -- (1.5,-3.2-\shift) -- (-0.2,-3.2-\shift);
			\fill[gray!20] (4.5+\shift,-1.5-\shift) -- (3.0+\shift,-1.5-\shift) -- (3.0+\shift,-3.2-\shift) -- (4.5+\shift,-3.2-\shift);
			\fill[gray!20] (6+\shift,1.7) -- (7.7+\shift,1.7) -- (7.7+\shift,0) -- (6+\shift,0);
			\fill[gray!20] (6+\shift,-3.2-\shift) -- (6+\shift,-1.5-\shift) -- (7.7+\shift,-1.5-\shift) -- (7.7+\shift,-3.2-\shift);
			
			\draw (-0.2,0) -- (4.5+\shift,0) -- (4.5+\shift,1.7) node at (2.5,2) {$\Omega^{(1)}$};
			\draw (-0.2,-1.5-\shift) -- (1.5, -1.5-\shift) -- (1.5,-3.2-\shift) node at (0.25,-3.5-\shift) {$\Omega^{(2)}$};
			\draw (3.0+\shift,-3.2-\shift) -- (3.0+\shift,-1.5-\shift) -- (4.5+\shift,-1.5-\shift) -- (4.5+\shift,-3.2-\shift) node at (4.+\shift,-3.5-\shift) {$\Omega^{(3)}$};
			\draw (6+\shift,1.7) -- (6+\shift,0) -- (7.7+\shift,0) node at (7.0+\shift,2) {$\Omega^{(4)}$};
			\draw (7.7+\shift,-1.5-\shift) -- (6+\shift,-1.5-\shift) -- (6+\shift,-3.2-\shift) node at (7+\shift,-3.5-\shift) {$\Omega^{(5)}$};
			
			\node at (-0.7,1.3) {$V^{(1)}$};
			\node at (-0.7, -0.4) {$V^{(1,2)}$};
			\node at (4+\shift,-0.75) {$V^{(1,3)}$};
			\node at (4.5+\shift+0.4,2) {$V^{(1,4)}$};
			
			\node at (-0.7,-2.8-\shift) {$V^{(2)}$};
			\node at (-0.7,-1.1-\shift) {$V^{(2,1)}$};
			\node at (2.0,-3.5-\shift) {$V^{(2,3)}$};
			
			\node at (2.6+\shift,-3.5-\shift) {$V^{(3,2)}$};	
			\node at (4.+\shift,-0.75-\shift) {$V^{(3,1)}$};
			\node at (4.5+\shift+0.4,-3.5-\shift) {$V^{(3,5)}$};
			
			\node at (8.7,1.3) {$V^{(4)}$};
			\node at (8.7,-0.4) {$V^{(4,5)}$};
			\node at (6+\shift,2) {$V^{(4,1)}$};
			
			\node at (8.7,-2.8-\shift) {$V^{(5)}$};
			\node at (8.7,-1.1-\shift) {$V^{(5,4)}$};
			\node at (6+\shift,-3.5-\shift) {$V^{(5,3)}$};
			
			\draw (-0.2,-0.4) -- (1.5,-0.4) {};
			\draw (3.0+\shift,-0.4) -- (4.5+\shift,-0.4) {};
			\draw (4.5+\shift+0.4,0) -- (4.5+\shift+0.4,1.7) {};
			
			\draw (-0.2,-1.1-\shift) -- (1.85,-1.1-\shift) {};
			\draw (1.9,-1.5-\shift) -- (1.9,-3.2-\shift) {};
			
			\draw (3.35,-1.1-\shift) -- (4.5+\shift,-1.1-\shift) {};
			\draw (2.6+\shift,-1.5-\shift) -- (2.6+\shift,-3.2-\shift) {};	
			\draw (4.5+\shift+0.4,-1.5-\shift) -- (4.5+\shift+0.4,-3.2-\shift) {};	
			
			\draw (6+\shift,-1.1-\shift) -- (7.7+\shift,-1.1-\shift) {};
			\draw (6+\shift-0.4,-1.5-\shift) -- (6+\shift-0.4,-3.2-\shift) {};	
			
			\draw (6+\shift,-0.4) -- (7.7+\shift,-0.4) {};
			\draw (6+\shift-0.4,0) -- (6+\shift-0.4,1.7) {};		
			
			\draw (1.85,0) node[circle, fill, inner sep = 2.5pt] {};
			\draw (1.85,1.0) node[circle, fill, inner sep = 2.5pt] {};
			\draw (0.25,0.0) node[circle, fill, inner sep = 2.5pt] {};
			\draw (0.25,1.0) node[circle, fill, inner sep = 2.5pt] {};
			\draw (3.35,0.0) node[circle, fill, inner sep = 2.5pt] {};
			\draw (3.35,1.0) node[circle, fill, inner sep = 2.5pt] {};
			\draw (5.0,0.0) node[circle, fill, inner sep = 2.5pt] {};
			\draw (5.0,1.0) node[circle, fill, inner sep = 2.5pt] {};
			\draw (3.0+\shift,-0.4) node[star, fill, inner sep = 2pt] {};
			\draw (3.75+\shift,-0.4) node[star, fill, inner sep = 2pt] {};
			\draw (4.5+\shift,-0.4) node[star, fill, inner sep = 2pt] {};
			\draw (0.75,-0.4) node[diamond, fill, inner sep = 2pt] {};
			\draw (1.5,-0.4) node[diamond, fill, inner sep = 2pt] {};
			\draw (4.5+\shift+0.4,0) node[rectangle, fill, inner sep = 2pt] {};
			\draw (4.5+\shift+0.4,0.75) node[rectangle, fill, inner sep = 2pt] {};
			\draw (4.5+\shift+0.4,1.4) node[rectangle, fill, inner sep = 2pt] {};
			
			\draw (1.5,-1.5-\shift) node[diamond, fill, inner sep = 2pt] {};
			\draw (1.5,-2.2-\shift) node[diamond, fill, inner sep = 2pt] {};
			\draw (1.5,-3.0-\shift) node[diamond, fill, inner sep = 2pt] {};
			\draw (0.75,-1.5-\shift) node[diamond, fill, inner sep = 2pt] {};
			\draw (0.75,-2.2-\shift) node[diamond, fill, inner sep = 2pt] {};
			\draw (0.75,-3.0-\shift) node[diamond, fill, inner sep = 2pt] {};
			
			\draw (1.9,-1.5-\shift) node[star, fill, inner sep = 2pt] {};
			\draw (1.9,-2.0-\shift) node[star, fill, inner sep = 2pt] {};
			\draw (1.9,-2.75-\shift) node[star, fill, inner sep = 2pt] {};
			\draw (0.25,-1.1-\shift) node[circle, fill, inner sep = 2.5pt] {};
			\draw (1.85,-1.1-\shift) node[circle, fill, inner sep = 2.5pt] {};
			\draw (1.85,-0.5-\shift) node[circle, fill, inner sep = 2.5pt] {};
			
			\draw (3.0+\shift,-1.5-\shift) node[star, fill, inner sep = 2pt] {};
			\draw (3.0+\shift,-2.0-\shift) node[star, fill, inner sep = 2pt] {};
			\draw (3.0+\shift,-2.75-\shift) node[star, fill, inner sep = 2pt] {};
			\draw (3.75+\shift,-1.5-\shift) node[star, fill, inner sep = 2pt] {};
			\draw (3.75+\shift,-2.0-\shift) node[star, fill, inner sep = 2pt] {};
			\draw (3.75+\shift,-2.75-\shift) node[star, fill, inner sep = 2pt] {};
			\draw (4.5+\shift,-1.5-\shift) node[star, fill, inner sep = 2pt] {};
			\draw (4.5+\shift,-2.0-\shift) node[star, fill, inner sep = 2pt] {};
			\draw (4.5+\shift,-2.75-\shift) node[star, fill, inner sep = 2pt] {};
			
			\draw (2.6+\shift,-1.5-\shift) node[diamond, fill, inner sep = 2pt] {};
			\draw (2.6+\shift,-2.2-\shift) node[diamond, fill, inner sep = 2pt] {};
			\draw (2.6+\shift,-3.0-\shift) node[diamond, fill, inner sep = 2pt] {};
			\draw (3.35,-0.5-\shift) node[circle, fill, inner sep = 2.5pt] {};
			\draw (3.35,-1.1-\shift) node[circle, fill, inner sep = 2.5pt] {};
			\draw (4.5+\shift,-1.1-\shift) node[circle, fill, inner sep = 2.5pt] {};
			\draw (4.5+\shift+0.4,-1.5-\shift) node[regular polygon,regular polygon sides=3, fill, inner sep = 1.5pt] {};
			\draw (4.5+\shift+0.4,-3-\shift) node[regular polygon,regular polygon sides=3, fill, inner sep = 1.5pt] {};
			
			\draw (6+\shift,0) node[rectangle, fill, inner sep = 2pt] {};
			\draw (6.5+\shift,0) node[rectangle, fill, inner sep = 2pt] {};
			\draw (6+\shift,0.75) node[rectangle, fill, inner sep = 2pt] {};
			\draw (6.5+\shift,0.75) node[rectangle, fill, inner sep = 2pt] {};
			\draw (6+\shift,1.4) node[rectangle, fill, inner sep = 2pt] {};
			\draw (6.5+\shift,1.4) node[rectangle, fill, inner sep = 2pt] {};
			\draw (7.3+\shift,0) node[rectangle, fill, inner sep = 2pt] {};
			\draw (7.3+\shift,0.75) node[rectangle, fill, inner sep = 2pt] {};
			\draw (7.3+\shift,1.4) node[rectangle, fill, inner sep = 2pt] {};
			\draw (6+\shift-0.4,0) node[circle, fill, inner sep = 2.5pt] {};
			\draw (6+\shift-0.4,1.) node[circle, fill, inner sep = 2.5pt] {};
			\draw (6+\shift,-0.4) node[regular polygon,regular polygon sides=3, fill, inner sep = 1.5pt] {};
			\draw (7.3+\shift,-0.4) node[regular polygon,regular polygon sides=3, fill, inner sep = 1.5pt] {};
			
			\draw (6+\shift,-1.5-\shift) node[regular polygon,regular polygon sides=3, fill, inner sep = 1.5pt] {};
			\draw (7.3+\shift,-1.5-\shift) node[regular polygon,regular polygon sides=3, fill, inner sep = 1.5pt] {};
			\draw (6+\shift,-3.-\shift) node[regular polygon,regular polygon sides=3, fill, inner sep = 1.5pt] {};
			\draw (7.3+\shift,-3.-\shift) node[regular polygon,regular polygon sides=3, fill, inner sep = 1.5pt] {};
			\draw (6+\shift-0.4,-1.5-\shift) node[star, fill, inner sep = 2pt] {};
			\draw (6+\shift-0.4,-2.-\shift) node[star, fill, inner sep = 2pt] {};
			\draw (6+\shift-0.4,-2.75-\shift) node[star, fill, inner sep = 2pt] {};
			\draw (6+\shift,-1.1-\shift) node[rectangle, fill, inner sep = 2pt] {};
			\draw (6.5+\shift,-1.1-\shift) node[rectangle, fill, inner sep = 2pt] {};
			\draw (7.3+\shift,-1.1-\shift) node[rectangle, fill, inner sep = 2pt] {};
			
			\draw (1.85,-1.1-\shift) -- (2.30,-1.1-\shift) {};
			\draw (2.30,-1.1-\shift) -- (2.90,-0.5-\shift) {};
			\draw (2.90,-0.5-\shift) -- (3.35,-0.5-\shift) {};
			
			\draw (1.85,-0.5-\shift) -- (2.30,-0.5-\shift) {};
			\draw (2.30,-0.5-\shift) -- (2.54,-0.74-\shift) {};
			\draw (2.66,-0.86-\shift) -- (2.90,-1.1-\shift) {};
			\draw (2.90,-1.1-\shift) -- (3.35,-1.1-\shift) {};

		\end{tikzpicture}
		\captionof{figure}{Schematic representation of local spaces with artificial interfaces \label{fig:ai}}
	\end{center}
\end{figure}
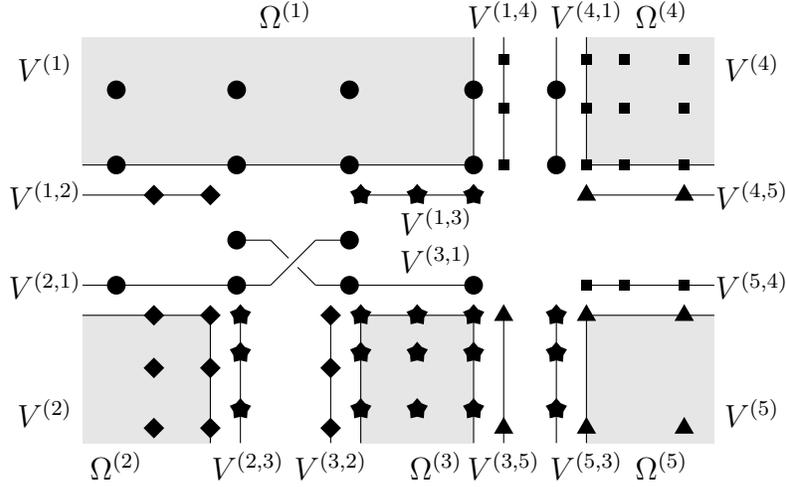

The discretization space is visualized in Figure~\ref{fig:ai}. Again,
we depict the the interfaces and artificial interfaces separately
since there live different function spaces.
We again represent every basis function with its Greville point.
Basis functions that origin from the same basis are denoted by the same symbol. 
\begin{figure}[h]
	\begin{center}
		\includegraphics[trim=0 20px 0 0]{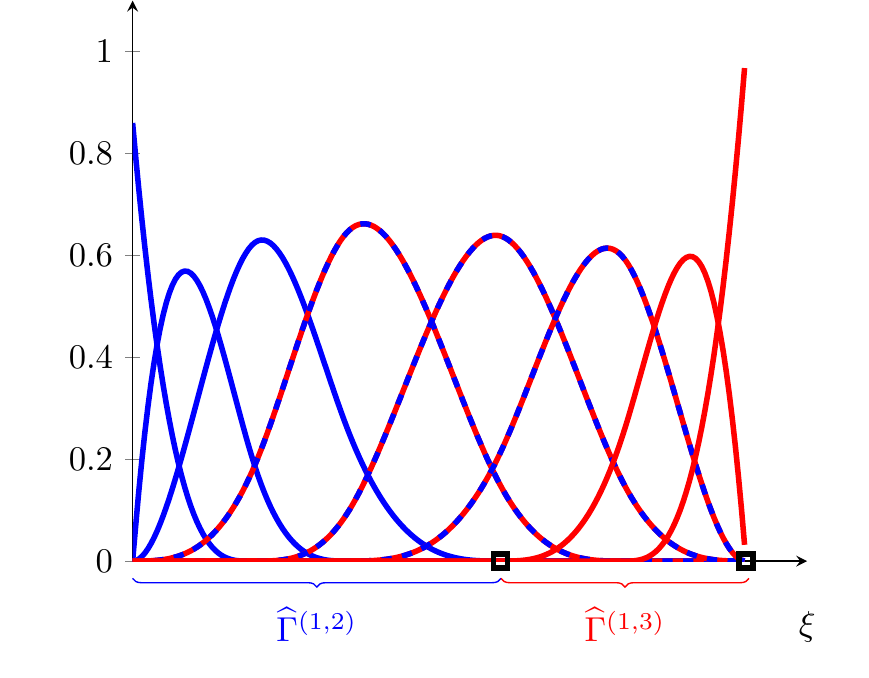}
	\end{center}
	\caption{Basis functions selected for artificial interfaces; the dashed basis functions are selected for both bases\label{fig:arti_basis}}
\end{figure}
The function spaces for the
artificial interfaces, like $V^{(2,1)}$, are the traces of the
corresponding spaces, here $V^{(1)}$. Their basis just consists of
the traces of those basis functions of the basis of $V^{(1)}$ that are
active on the interface $\Gamma^{(1,2)}$. While this is rather obvious
for the case of interfaces that span over a whole edge, it needs some
more elaboration in the context of T-junctions. As one can see in
Figure~\ref{fig:arti_basis}, basis functions on the bottom side of the
patch $\Omega^{(1)}$ are chosen to be part of the artificial interface
if they do not vanish on $\Gamma^{(1,2)}$. This includes basis functions
whose Greville point is not located on $\Gamma^{(1,2)}$. The corresponding basis functions form a part of both, the basis for
$V^{(2,1)}$ and the basis for $V^{(3,1)}$. In Figure~\ref{fig:ai}, we depict these degrees of
freedom on extensions of the artificial interfaces.

For the IETI formulation, we collect the spaces that share the first
index together, so for example $V_e^{(1)}$ is composed of
$V^{(1)}$, $V^{(1,2)}$, $V^{(1,3)}$, and $V^{(1,4)}$.

We introduce local bilinear forms $a_e^{(k)}(\cdot,\cdot)$ and
$d_e^{(k)}(\cdot,\cdot)$ and the local linear functional $\langle f_e^{(k)},\cdot\rangle$ that live on the spaces $V_e^{(k)}$.
They can be seen as the local counterparts to
$a_h(\cdot,\cdot)$, $d(\cdot,\cdot)$, and $\langle f,\cdot\rangle$ and we define them by 
\[
\begin{aligned}
	a_e^{(k)}(u_e^{(k)},v_e^{(k)}) &:= a^{(k)}(u_e^{(k)},v_e^{(k)}) + m^{(k)}(u_e^{(k)},v_e^{(k)}) + r^{(k)}(u_e^{(k)},v_e^{(k)}), \\
	d_e^{(k)}(u_e^{(k)},v_e^{(k)}) &:= a^{(k)}(u_e^{(k)},v_e^{(k)}) + r^{(k)}(u_e^{(k)},v_e^{(k)}), \\
	\langle f_e^{(k)},v_e^{(k)}\rangle & := \int_{\Omega^{(k)}} f v^{(k)} \mathrm dx,
\end{aligned}
\]
where we write with a slight abuse of notation
\[
\begin{aligned}
	a^{(k)}(u_e^{(k)},v_e^{(k)}) &:= \int_{\Omega^{(k)}} \alpha_k \nabla u^{(k)} \cdot \nabla v^{(k)} \; \textrm{d}x, \\
	m^{(k)}(u_e^{(k)},v_e^{(k)}) &:= \sum_{\ell \in \mathcal{N}_\Gamma(k)} \int_{\Gamma^{(k,\ell)}} \frac{\alpha_k}{2} 
	\left( 
	\frac{\partial u^{(k)}}{\partial n_k}(v^{(k,\ell)} - v^{(k)}) + 
	\frac{\partial v^{(k)}}{\partial n_k}(u^{(k,\ell)} - u^{(k)})
	\right) \; \textrm{d}s, \\
	r^{(k)}(u_e^{(k)},v_e^{(k)}) &:= \sum_{\ell \in \mathcal{N}_\Gamma(k)} \int_{\Gamma^{(k,\ell)}} \alpha_k \frac{\delta p^2}{\min\{h_{k},h_{\ell}\}}   
	(u^{(k,\ell)} - u^{(k)})(v^{(k,\ell)} - v^{(k)}) \; \textrm{d}s.
\end{aligned}
\]

The discretization of $a_e^{(k)}(\cdot,\cdot)$ and
$\langle f_e^{(k)},\cdot \rangle$
with respect to the chosen basis for $V_e^{(k)}$ gives the local
linear system
\begin{equation}\label{linsys:local}
	A^{(k)}\, \underline{u}_e^{(k)} = \underline{f}_e^{(k)}.
\end{equation}

\subsection{Primal degrees of freedom}\label{subs:4:2}

\begin{figure}[h]
	\begin{center}
		\begin{tikzpicture}
			\def\shift{0.5}
			\fill[gray!20] (-0.2,0) -- (4.5+\shift,0) -- (4.5+\shift,1.7) -- (-0.2,1.7);
			\fill[gray!20] (-0.2,-1.5-\shift) -- (1.5,-1.5-\shift) -- (1.5,-3.2-\shift) -- (-0.2,-3.2-\shift);
			\fill[gray!20] (4.5+\shift,-1.5-\shift) -- (3.0+\shift,-1.5-\shift) -- (3.0+\shift,-3.2-\shift) -- (4.5+\shift,-3.2-\shift);
			\fill[gray!20] (6+\shift,1.7) -- (7.7+\shift,1.7) -- (7.7+\shift,0) -- (6+\shift,0);
			\fill[gray!20] (6+\shift,-3.2-\shift) -- (6+\shift,-1.5-\shift) -- (7.7+\shift,-1.5-\shift) -- (7.7+\shift,-3.2-\shift);
			
			\draw (-0.2,0) -- (4.5+\shift,0) -- (4.5+\shift,1.7) node at (2.5,2) {$\Omega^{(1)}$};
			\draw (-0.2,-1.5-\shift) -- (1.5, -1.5-\shift) -- (1.5,-3.2-\shift) node at (0.25,-3.5-\shift) {$\Omega^{(2)}$};
			\draw (3.0+\shift,-3.2-\shift) -- (3.0+\shift,-1.5-\shift) -- (4.5+\shift,-1.5-\shift) -- (4.5+\shift,-3.2-\shift) node at (4.+\shift,-3.5-\shift) {$\Omega^{(3)}$};
			\draw (6+\shift,1.7) -- (6+\shift,0) -- (7.7+\shift,0) node at (7.0+\shift,2) {$\Omega^{(4)}$};
			\draw (7.7+\shift,-1.5-\shift) -- (6+\shift,-1.5-\shift) -- (6+\shift,-3.2-\shift) node at (7+\shift,-3.5-\shift) {$\Omega^{(5)}$};
		
			\draw (-0.2,-0.4) -- (1.5,-0.4) {};
			\draw (3.0+\shift,-0.4) -- (4.5+\shift,-0.4) {};
			\draw (4.5+\shift+0.4,0) -- (4.5+\shift+0.4,1.7) {};
			
			\draw (-0.2,-1.1-\shift) -- (1.85,-1.1-\shift) {};
			\draw (1.9,-1.5-\shift) -- (1.9,-3.2-\shift) {};
			
			\draw (3.35,-1.1-\shift) -- (4.5+\shift,-1.1-\shift) {};
			\draw (2.6+\shift,-1.5-\shift) -- (2.6+\shift,-3.2-\shift) {};	
			\draw (4.5+\shift+0.4,-1.5-\shift) -- (4.5+\shift+0.4,-3.2-\shift) {};	
			
			\draw (6+\shift,-1.1-\shift) -- (7.7+\shift,-1.1-\shift) {};
			\draw (6+\shift-0.4,-1.5-\shift) -- (6+\shift-0.4,-3.2-\shift) {};	
			
			\draw (6+\shift,-0.4) -- (7.7+\shift,-0.4) {};
			\draw (6+\shift-0.4,0) -- (6+\shift-0.4,1.7) {};		
			
			\draw (1.85,0) node[circle, fill, inner sep = 2.5pt] (A1) {};
			\draw (1.85,1.0) node[circle, fill, inner sep = 2.5pt] {};
			\draw (0.25,0.0) node[circle, fill, inner sep = 2.5pt] {};
			\draw (0.25,1.0) node[circle, fill, inner sep = 2.5pt] {};
			\draw (3.35,0.0) node[circle, fill, inner sep = 2.5pt] (A2) {};
			\draw (3.35,1.0) node[circle, fill, inner sep = 2.5pt] {};
			\draw (5.0,0.0) node[circle, fill, inner sep = 2.5pt] (A3) {};
			\draw (5.0,1.0) node[circle, fill, inner sep = 2.5pt] {};
			\draw (3.0+\shift,-0.4) node[star, fill, inner sep = 2pt] (A4) {};
			\draw (3.75+\shift,-0.4) node[star, fill, inner sep = 2pt] {};
			\draw (4.5+\shift,-0.4) node[star, fill, inner sep = 2pt] (A5) {};
			\draw (0.75,-0.4) node[diamond, fill, inner sep = 2pt] {};
			\draw (1.5,-0.4) node[diamond, fill, inner sep = 2pt] (A6) {};
			\draw (4.5+\shift+0.4,0) node[rectangle, fill, inner sep = 2pt] (A7) {};
			\draw (4.5+\shift+0.4,0.75) node[rectangle, fill, inner sep = 2pt] {};
			\draw (4.5+\shift+0.4,1.4) node[rectangle, fill, inner sep = 2pt] {};
			
			\draw (1.5,-1.5-\shift) node[diamond, fill, inner sep = 2pt] (B1) {};
			\draw (1.5,-2.2-\shift) node[diamond, fill, inner sep = 2pt] {};
			\draw (1.5,-3.0-\shift) node[diamond, fill, inner sep = 2pt] {};
			\draw (0.75,-1.5-\shift) node[diamond, fill, inner sep = 2pt] {};
			\draw (0.75,-2.2-\shift) node[diamond, fill, inner sep = 2pt] {};
			\draw (0.75,-3.0-\shift) node[diamond, fill, inner sep = 2pt] {};
			\draw (1.9,-1.5-\shift) node[star, fill, inner sep = 2pt] (B2) {};
			\draw (1.9,-2.0-\shift) node[star, fill, inner sep = 2pt] {};
			\draw (1.9,-2.75-\shift) node[star, fill, inner sep = 2pt] {};
			\draw (0.25,-1.1-\shift) node[circle, fill, inner sep = 2.5pt] {};
			\draw (1.85,-1.1-\shift) node[circle, fill, inner sep = 2.5pt] (B3) {};
			\draw (1.85,-0.5-\shift) node[circle, fill, inner sep = 2.5pt] (C7) {};
			
			\draw (3.0+\shift,-1.5-\shift) node[star, fill, inner sep = 2pt] (C1) {};
			\draw (3.0+\shift,-2.0-\shift) node[star, fill, inner sep = 2pt] {};
			\draw (3.0+\shift,-2.75-\shift) node[star, fill, inner sep = 2pt] {};
			\draw (3.75+\shift,-1.5-\shift) node[star, fill, inner sep = 2pt] {};
			\draw (3.75+\shift,-2.0-\shift) node[star, fill, inner sep = 2pt] {};
			\draw (3.75+\shift,-2.75-\shift) node[star, fill, inner sep = 2pt] {};
			\draw (4.5+\shift,-1.5-\shift) node[star, fill, inner sep = 2pt] (C2) {};
			\draw (4.5+\shift,-2.0-\shift) node[star, fill, inner sep = 2pt] {};
			\draw (4.5+\shift,-2.75-\shift) node[star, fill, inner sep = 2pt] {};
			\draw (2.6+\shift,-1.5-\shift) node[diamond, fill, inner sep = 2pt] (C3) {};
			\draw (2.6+\shift,-2.2-\shift) node[diamond, fill, inner sep = 2pt] {};
			\draw (2.6+\shift,-3.0-\shift) node[diamond, fill, inner sep = 2pt] {};
			\draw (3.35,-0.5-\shift) node[circle, fill, inner sep = 2.5pt] (B4) {};
			\draw (3.35,-1.1-\shift) node[circle, fill, inner sep = 2.5pt] (C4) {};
			\draw (4.5+\shift,-1.1-\shift) node[circle, fill, inner sep = 2.5pt] (C5) {};
			\draw (4.5+\shift+0.4,-1.5-\shift) node[regular polygon,regular polygon sides=3, fill, inner sep = 1.5pt] (C6) {};
			\draw (4.5+\shift+0.4,-3-\shift) node[regular polygon,regular polygon sides=3, fill, inner sep = 1.5pt] {};
			
			\draw (6+\shift,0) node[rectangle, fill, inner sep = 2pt] (D1) {};
			\draw (6.5+\shift,0) node[rectangle, fill, inner sep = 2pt] {};
			\draw (6+\shift,0.75) node[rectangle, fill, inner sep = 2pt] {};
			\draw (6.5+\shift,0.75) node[rectangle, fill, inner sep = 2pt] {};
			\draw (6+\shift,1.4) node[rectangle, fill, inner sep = 2pt] {};
			\draw (6.5+\shift,1.4) node[rectangle, fill, inner sep = 2pt] {};
			\draw (7.3+\shift,0) node[rectangle, fill, inner sep = 2pt] {};
			\draw (7.3+\shift,0.75) node[rectangle, fill, inner sep = 2pt] {};
			\draw (7.3+\shift,1.4) node[rectangle, fill, inner sep = 2pt] {};
			\draw (6+\shift-0.4,0) node[circle, fill, inner sep = 2.5pt] (D2) {};
			\draw (6+\shift-0.4,1.) node[circle, fill, inner sep = 2.5pt] {};
			\draw (6+\shift,-0.4) node[regular polygon,regular polygon sides=3, fill, inner sep = 1.5pt] (D3) {};
			\draw (7.3+\shift,-0.4) node[regular polygon,regular polygon sides=3, fill, inner sep = 1.5pt] {};
			
			\draw (6+\shift,-1.5-\shift) node[regular polygon,regular polygon sides=3, fill, inner sep = 1.5pt] (E1) {};
			\draw (7.3+\shift,-1.5-\shift) node[regular polygon,regular polygon sides=3, fill, inner sep = 1.5pt] {};
			\draw (6+\shift,-3.-\shift) node[regular polygon,regular polygon sides=3, fill, inner sep = 1.5pt] {};
			\draw (7.3+\shift,-3.-\shift) node[regular polygon,regular polygon sides=3, fill, inner sep = 1.5pt] {};
			\draw (6+\shift-0.4,-1.5-\shift) node[star, fill, inner sep = 2pt] (E2) {};
			\draw (6+\shift-0.4,-2.-\shift) node[star, fill, inner sep = 2pt] {};
			\draw (6+\shift-0.4,-2.75-\shift) node[star, fill, inner sep = 2pt] {};
			\draw (6+\shift,-1.1-\shift) node[rectangle, fill, inner sep = 2pt] (E3) {};
			\draw (6.5+\shift,-1.1-\shift) node[rectangle, fill, inner sep = 2pt] {};
			\draw (7.3+\shift,-1.1-\shift) node[rectangle, fill, inner sep = 2pt] {};
			
			\draw (1.85,-1.1-\shift) -- (2.30,-1.1-\shift) {};
			\draw (2.30,-1.1-\shift) -- (2.90,-0.5-\shift) {};
			\draw (2.90,-0.5-\shift) -- (3.35,-0.5-\shift) {};
			
			\draw (1.85,-0.5-\shift) -- (2.30,-0.5-\shift) {};
			\draw (2.30,-0.5-\shift) -- (2.54,-0.74-\shift) {};
			\draw (2.66,-0.86-\shift) -- (2.90,-1.1-\shift) {};
			\draw (2.90,-1.1-\shift) -- (3.35,-1.1-\shift) {};

			\draw[<->, line width = 1pt, latex-latex, bend right]
			(C7) edge (A1) (A2) edge (B4)
			(C7) edge (B3) (C4) edge (B4)
			(A6) edge (B1) (C2) edge (A5)
			(C2) edge (E2) (E3) edge (D1);
			
			\draw[<->, line width = 1pt, latex-latex, bend left]
			(A4) edge (C1) (B1) edge (C3) 
			(C1) edge (B2) (D1) edge (A7) 
			(A3) edge (D2) (C6) edge (E1)
			(E1) edge (D3) (C5) edge (A3);
		\end{tikzpicture}
		\captionof{figure}{Primal constraints \label{fig:primal}}
	\end{center}
\end{figure}
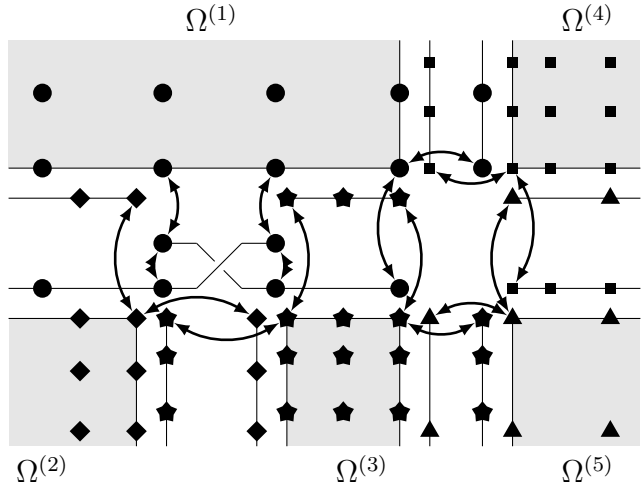

Concerning the choice of the primal degrees of freedom, we follow
the idea of vertex values. If a vertex $\textbf x$ happens to be located
on a corner of a patch $\Omega^{(k)}$, on each of the corresponding patches,
there is only one basis
function active. In this case, the primal constraint enforces
\[
u^{(k)} (\textbf x) = u^{(\ell,k)}(\textbf x)
\]
for all neighbors $\Omega^{(\ell)}$ that share the the vertex $\textbf x$. The primal
constraint is enforced by requiring that the coefficients for the
corresponding basis function agree.

If a vertex happens to be a T-junction $\textbf x$, we select all
basis functions which do not vanish on the T-junction. All corresponding
degrees of freedom are then treated as primal degrees of freedom. The
primal constraint is again enforced by requiring that the coefficients
for the corresponding basis functions agree.

The setup of the primal degrees of freedom is visualized in
Figure~\ref{fig:primal}.

\subsection{Jump matrices and Lagrange multipliers}\label{subs:4:3}

\begin{figure}[h]
	\begin{center}
		\begin{tikzpicture}
			\def\shift{0.5}
			\fill[gray!20] (-0.2,0) -- (4.5+\shift,0) -- (4.5+\shift,1.7) -- (-0.2,1.7);
			\fill[gray!20] (-0.2,-1.5-\shift) -- (1.5,-1.5-\shift) -- (1.5,-3.2-\shift) -- (-0.2,-3.2-\shift);
			\fill[gray!20] (4.5+\shift,-1.5-\shift) -- (3.0+\shift,-1.5-\shift) -- (3.0+\shift,-3.2-\shift) -- (4.5+\shift,-3.2-\shift);
			\fill[gray!20] (6+\shift,1.7) -- (7.7+\shift,1.7) -- (7.7+\shift,0) -- (6+\shift,0);
			\fill[gray!20] (6+\shift,-3.2-\shift) -- (6+\shift,-1.5-\shift) -- (7.7+\shift,-1.5-\shift) -- (7.7+\shift,-3.2-\shift);
			
			\draw (-0.2,0) -- (4.5+\shift,0) -- (4.5+\shift,1.7) node at (2.5,2) {$\Omega^{(1)}$};
			\draw (-0.2,-1.5-\shift) -- (1.5, -1.5-\shift) -- (1.5,-3.2-\shift) node at (0.25,-3.5-\shift) {$\Omega^{(2)}$};
			\draw (3.0+\shift,-3.2-\shift) -- (3.0+\shift,-1.5-\shift) -- (4.5+\shift,-1.5-\shift) -- (4.5+\shift,-3.2-\shift) node at (4.+\shift,-3.5-\shift) {$\Omega^{(3)}$};
			\draw (6+\shift,1.7) -- (6+\shift,0) -- (7.7+\shift,0) node at (7.0+\shift,2) {$\Omega^{(4)}$};
			\draw (7.7+\shift,-1.5-\shift) -- (6+\shift,-1.5-\shift) -- (6+\shift,-3.2-\shift) node at (7+\shift,-3.5-\shift) {$\Omega^{(5)}$};
			
			\draw (-0.2,-0.4) -- (1.5,-0.4) {};
			\draw (3.0+\shift,-0.4) -- (4.5+\shift,-0.4) {};
			\draw (4.5+\shift+0.4,0) -- (4.5+\shift+0.4,1.7) {};
			
			\draw (-0.2,-1.1-\shift) -- (1.85,-1.1-\shift) {};
			\draw (1.9,-1.5-\shift) -- (1.9,-3.2-\shift) {};
			
			\draw (3.35,-1.1-\shift) -- (4.5+\shift,-1.1-\shift) {};
			\draw (2.6+\shift,-1.5-\shift) -- (2.6+\shift,-3.2-\shift) {};	
			\draw (4.5+\shift+0.4,-1.5-\shift) -- (4.5+\shift+0.4,-3.2-\shift) {};	
			
			\draw (6+\shift,-1.1-\shift) -- (7.7+\shift,-1.1-\shift) {};
			\draw (6+\shift-0.4,-1.5-\shift) -- (6+\shift-0.4,-3.2-\shift) {};	
			
			\draw (6+\shift,-0.4) -- (7.7+\shift,-0.4) {};
			\draw (6+\shift-0.4,0) -- (6+\shift-0.4,1.7) {};		
			
			\draw (1.85,0) node[circle, fill, inner sep = 2.5pt]  {};
			\draw (1.85,1.0) node[circle, fill, inner sep = 2.5pt] {};
			\draw (0.25,0.0) node[circle, fill, inner sep = 2.5pt] (A1) {};
			\draw (0.25,1.0) node[circle, fill, inner sep = 2.5pt] {};
			\draw (3.35,0.0) node[circle, fill, inner sep = 2.5pt] {};
			\draw (3.35,1.0) node[circle, fill, inner sep = 2.5pt] {};
			\draw (5.0,0.0) node[circle, fill, inner sep = 2.5pt] {};
			\draw (5.0,1.0) node[circle, fill, inner sep = 2.5pt] (A2) {};
			\draw (3.0+\shift,-0.4) node[star, fill, inner sep = 2pt] {};
			\draw (3.75+\shift,-0.4) node[star, fill, inner sep = 2pt] (A3) {};
			\draw (4.5+\shift,-0.4) node[star, fill, inner sep = 2pt] {};
			\draw (0.75,-0.4) node[diamond, fill, inner sep = 2pt] (A4) {};
			\draw (1.5,-0.4) node[diamond, fill, inner sep = 2pt] {};
			\draw (4.5+\shift+0.4,0) node[rectangle, fill, inner sep = 2pt] {};
			\draw (4.5+\shift+0.4,0.75) node[rectangle, fill, inner sep = 2pt] (A5) {};
			\draw (4.5+\shift+0.4,1.4) node[rectangle, fill, inner sep = 2pt] (A6) {};
			
			\draw (1.5,-1.5-\shift) node[diamond, fill, inner sep = 2pt] {};
			\draw (1.5,-2.2-\shift) node[diamond, fill, inner sep = 2pt] (B1) {};
			\draw (1.5,-3.0-\shift) node[diamond, fill, inner sep = 2pt] (B2) {};
			\draw (0.75,-1.5-\shift) node[diamond, fill, inner sep = 2pt] (B3) {};
			\draw (0.75,-2.2-\shift) node[diamond, fill, inner sep = 2pt] {};
			\draw (0.75,-3.0-\shift) node[diamond, fill, inner sep = 2pt] {};
			\draw (1.9,-1.5-\shift) node[star, fill, inner sep = 2pt] {};
			\draw (1.9,-2.0-\shift) node[star, fill, inner sep = 2pt] (B4){};
			\draw (1.9,-2.75-\shift) node[star, fill, inner sep = 2pt] (B5) {};
			\draw (0.25,-1.1-\shift) node[circle, fill, inner sep = 2.5pt] (B6) {};
			\draw (1.85,-1.1-\shift) node[circle, fill, inner sep = 2.5pt] {};
			\draw (1.85,-0.5-\shift) node[circle, fill, inner sep = 2.5pt] {};
			
			\draw (3.0+\shift,-1.5-\shift) node[star, fill, inner sep = 2pt] {};
			\draw (3.0+\shift,-2.0-\shift) node[star, fill, inner sep = 2pt] (C1) {};
			\draw (3.0+\shift,-2.75-\shift) node[star, fill, inner sep = 2pt] (C2) {};
			\draw (3.75+\shift,-1.5-\shift) node[star, fill, inner sep = 2pt] (C3) {};
			\draw (3.75+\shift,-2.0-\shift) node[star, fill, inner sep = 2pt] {};
			\draw (3.75+\shift,-2.75-\shift) node[star, fill, inner sep = 2pt] {};
			\draw (4.5+\shift,-1.5-\shift) node[star, fill, inner sep = 2pt] {};
			\draw (4.5+\shift,-2.0-\shift) node[star, fill, inner sep = 2pt] (C4) {};
			\draw (4.5+\shift,-2.75-\shift) node[star, fill, inner sep = 2pt] (C5) {};
			\draw (2.6+\shift,-1.5-\shift) node[diamond, fill, inner sep = 2pt] {};
			\draw (2.6+\shift,-2.2-\shift) node[diamond, fill, inner sep = 2pt] (C6) {};
			\draw (2.6+\shift,-3.0-\shift) node[diamond, fill, inner sep = 2pt] (C7) {};
			\draw (3.35,-0.5-\shift) node[circle, fill, inner sep = 2.5pt]  {};
			\draw (3.35,-1.1-\shift) node[circle, fill, inner sep = 2.5pt] {};
			\draw (4.5+\shift,-1.1-\shift) node[circle, fill, inner sep = 2.5pt] {};
			\draw (4.5+\shift+0.4,-1.5-\shift) node[regular polygon,regular polygon sides=3, fill, inner sep = 1.5pt] {};
			\draw (4.5+\shift+0.4,-3-\shift) node[regular polygon,regular polygon sides=3, fill, inner sep = 1.5pt] (C8) {};
			
			\draw (6+\shift,0) node[rectangle, fill, inner sep = 2pt] {};
			\draw (6.5+\shift,0) node[rectangle, fill, inner sep = 2pt] (D1) {};
			\draw (6+\shift,0.75) node[rectangle, fill, inner sep = 2pt] (D2) {};
			\draw (6.5+\shift,0.75) node[rectangle, fill, inner sep = 2pt] {};
			\draw (6+\shift,1.4) node[rectangle, fill, inner sep = 2pt] (D3) {};
			\draw (6.5+\shift,1.4) node[rectangle, fill, inner sep = 2pt] {};
			\draw (7.3+\shift,0) node[rectangle, fill, inner sep = 2pt] (D4) {};
			\draw (7.3+\shift,0.75) node[rectangle, fill, inner sep = 2pt] {};
			\draw (7.3+\shift,1.4) node[rectangle, fill, inner sep = 2pt] {};
			\draw (6+\shift-0.4,0) node[circle, fill, inner sep = 2.5pt] {};
			\draw (6+\shift-0.4,1.) node[circle, fill, inner sep = 2.5pt] (D5) {};
			\draw (6+\shift,-0.4) node[regular polygon,regular polygon sides=3, fill, inner sep = 1.5pt] {};
			\draw (7.3+\shift,-0.4) node[regular polygon,regular polygon sides=3, fill, inner sep = 1.5pt] (D6) {};
			
			\draw (6+\shift,-1.5-\shift) node[regular polygon,regular polygon sides=3, fill, inner sep = 1.5pt] {};
			\draw (7.3+\shift,-1.5-\shift) node[regular polygon,regular polygon sides=3, fill, inner sep = 1.5pt] (E1) {};
			\draw (6+\shift,-3.-\shift) node[regular polygon,regular polygon sides=3, fill, inner sep = 1.5pt] (E2) {};
			\draw (7.3+\shift,-3.-\shift) node[regular polygon,regular polygon sides=3, fill, inner sep = 1.5pt] {};
			\draw (6+\shift-0.4,-1.5-\shift) node[star, fill, inner sep = 2pt] {};
			\draw (6+\shift-0.4,-2.-\shift) node[star, fill, inner sep = 2pt] (E3) {};
			\draw (6+\shift-0.4,-2.75-\shift) node[star, fill, inner sep = 2pt] (E4) {};
			\draw (6+\shift,-1.1-\shift) node[rectangle, fill, inner sep = 2pt] {};
			\draw (6.5+\shift,-1.1-\shift) node[rectangle, fill, inner sep = 2pt] (E5) {};
			\draw (7.3+\shift,-1.1-\shift) node[rectangle, fill, inner sep = 2pt] (E6) {};
			
			\draw (1.85,-1.1-\shift) -- (2.30,-1.1-\shift) {};
			\draw (2.30,-1.1-\shift) -- (2.90,-0.5-\shift) {};
			\draw (2.90,-0.5-\shift) -- (3.35,-0.5-\shift) {};
			
			\draw (1.85,-0.5-\shift) -- (2.30,-0.5-\shift) {};
			\draw (2.30,-0.5-\shift) -- (2.54,-0.74-\shift) {};
			\draw (2.66,-0.86-\shift) -- (2.90,-1.1-\shift) {};
			\draw (2.90,-1.1-\shift) -- (3.35,-1.1-\shift) {};

			\draw[<->, line width = 1pt, latex-latex, bend right]
			(A1) edge (B6) (A4) edge (B3) (A3) edge (C3) (A5) edge (D2) (A6) edge (D3) 
			(E1) edge (D6) (E3) edge (C4) (E4) edge (C5);
			
			\draw[<->, line width = 1pt, latex-latex, bend left]
			(B4) edge (C1) (C6) edge (B1) (C7) edge (B2) 
			(E2) edge (C8) (E5) edge (D1) (E6) edge (D4);
			
			\draw[<->, line width = 1pt, latex-latex]
			(B5) edge (C2) (A2) edge (D5);
		\end{tikzpicture}
		\captionof{figure}{Action of the Lagrange multipliers \label{fig:ommiting}}
	\end{center}
\end{figure}
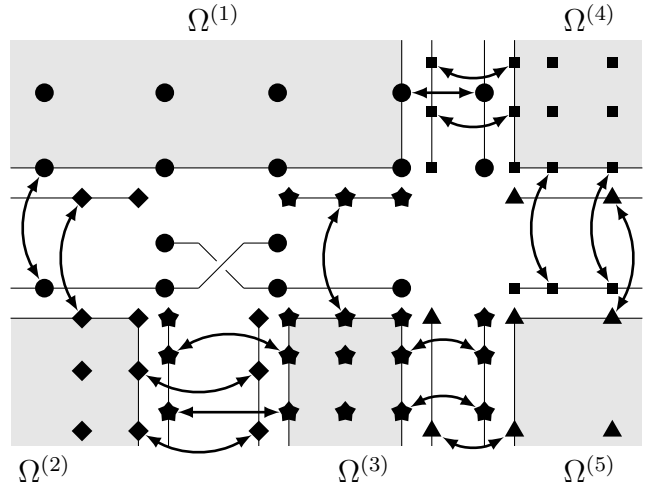

In the following, we introduce constraints that ensure the continuity
of the solution between the interface on one patch and the corresponding
artificial interfaces of the neighboring patches, i.e., between the
function in $V^{(k)}$ and the functions in $V^{(\ell,k)}$. Note that the
function spaces $V^{(k)}|_{\Gamma^{(k,\ell)}}$ and $V^{(\ell,k)}$
and the corresponding bases agree. This means that we obtain continuity
if the corresponding coefficients agree. Since the primal degrees of
freedom are enforced strongly, there are no constraints corresponding
to the primal degrees of freedom, see Figure~\ref{fig:ommiting}.

The constraints are represented by a matrix
\[
B =
\begin{pmatrix}
	B^{(1)} & \cdots & B^{(K)}
\end{pmatrix},
\]
where each row corresponds to one constraint enforcing the agreement
of one of the coefficients in the usual way, i.e., such that 
every row has two non-zero entries: $+1$ and $-1$. The choice
\[
B \underline v = 0,
\]
corresponds to a function $v$ that satisfies the constraints.
Note that the proposed choice of primal degrees of freedom guarantees
that there is no degree of freedom which is affected
by more than one constraint. This is a property, which we use in the
condition number analysis.

\subsection{IETI system}\label{subs:4:4}

Before we are able to setup the overall IETI system, we
partition the degrees of freedom into
the primal degrees of freedom (index $\Pi$),
the degrees of freedom, which are subject to Lagrange
multipliers, (index $\Delta$), and
the remaining, i.e., interior, degrees of freedom (index $\mathrm I$).
Using this partitioning, the matrices $A^{(k)}$, $B^{(k)}$
and the vector $\underline{f}_e^{(k)}$ have the following form:
\[
A^{(k)}
=
\begin{pmatrix}
	A_{\mathrm I \mathrm I}^{(k)} & A_{\mathrm I\Delta}^{(k)} & A_{\mathrm I\Pi}^{(k)}\\
	A_{\Delta \mathrm I}^{(k)} & A_{\Delta\Delta}^{(k)} & A_{\Delta\Pi}^{(k)} \\
	A_{\Pi \mathrm I}^{(k)} & A_{\Pi\Delta}^{(k)} & A_{\Pi\Pi}^{(k)} \\
\end{pmatrix}
,
\quad
\raisebox{1.2em}{
	$
	B^{(k)}
	=
	\underbrace{
		\begin{pmatrix}
			B_{\mathrm I}^{(k)} & B_{\Delta}^{(k)} & B_{\Pi}^{(k)}
		\end{pmatrix}
	}_{\displaystyle =
		\begin{pmatrix}
			0 & B_{\Delta}^{(k)} & 0
		\end{pmatrix}
	}
	$
}
,
\quad
\mbox{and}
\quad
\underline{f}_e^{(k)}
=
\begin{pmatrix}
	\underline{f}_{\mathrm I}^{(k)}\\
	\underline{f}_{\Delta}^{(k)}\\
	\underline{f}^{(k)}_{\Pi}
\end{pmatrix}
.
\]
Here, we make use of the fact that there are no Lagrange
multipliers that are acting on the interior degrees of freedom
(and thus $B_{\mathrm I}^{(k)}=0$) or on the primal degrees of
freedom (and thus $B_{\Pi}^{(k)}=0$).

On each patch we eliminate the primal degrees of freedom. So, we define
\[
\widetilde A^{(k)} :=
\begin{pmatrix}
	A_{\mathrm{I}\mathrm{I}}^{(k)} & A_{\mathrm{I}\Delta}^{(k)} \\
	A_{\Delta\mathrm{I}}^{(k)} & A_{\Delta\Delta}^{(k)} \\
\end{pmatrix}
,\quad
\raisebox{1.2em}{
	$\widetilde B^{(k)} :=
	\underbrace{
		\begin{pmatrix}
			B_{\mathrm{I}}^{(k)} & B_{\Delta}^{(k)}
		\end{pmatrix}
	}_{\displaystyle 		=
		\begin{pmatrix}
			0 & B_{\Delta}^{(k)}
		\end{pmatrix}
	}
	$},
\quad\mbox{and}\quad
\widetilde{\underline{f}}^{(k)} :=
\begin{pmatrix}
	\underline{f}_{\mathrm I}^{(k)}\\
	\underline{f}_{\Delta}^{(k)}\\
\end{pmatrix}
.
\]
These local matrices are collected to global matrices $A = \text{diag}(A^{(1)}, \dots, A^{(K)})$,
$\widetilde A = \text{diag}(\widetilde A^{(1)}, \dots, \widetilde A^{(K)})$ and
$\widetilde B = (\widetilde B^{(1)}, \dots, \widetilde B^{(K)})$,
and the local vectors into global vectors
$\widetilde{\underline{f}} = ((\widetilde{\underline{f}}^{(1)})^\top, \dots, (\widetilde{\underline{f}}^{(K)})^\top)^\top$
and
${\underline{f}} = (({\underline{f}_e}^{(1)})^\top, \dots, ({\underline{f}_e}^{(K)})^\top)^\top$.

For the setup of the primal problem, we introduce an $A$-orthogonal basis.
To do so, we first introduce for each patch an
$A^{(k)}$-orthogonal basis via
\[
\Psi^{(k)}
:=
\begin{pmatrix}
	A_{\mathrm{I}\mathrm{I}}^{(k)} & A_{\mathrm{I}\Delta}^{(k)}&0\\
	A_{\Delta\mathrm{I}}^{(k)} & A_{\Delta\Delta}^{(k)}&0\\		
	0 & 0 & I \\
\end{pmatrix}^{-1}
\begin{pmatrix}
	- A_{\mathrm{I}\Pi}^{(k)}\\
	- A_{\Delta\Pi}^{(k)}		\\
	I
\end{pmatrix}.
\]
Let $R^{(k)}$ be a binary matrix that restricts a global coefficient
vector of the primal degrees of freedom to the primal degrees that are
associated to space $V_e^{(k)}$. Then, we obtain the matrix representing
the global $A$-orthogonal basis for the primal degrees of freedom via
\[
\Psi := \begin{pmatrix}
	\Psi^{(1)}R^{(1)} \\ \vdots\\  \Psi^{(K)}R^{(K)}
\end{pmatrix} .
\]

The overall IETI-DP system reads as follows. Find
$(\widetilde{\underline{u}}^\top, \underline u_\Pi^\top, \underline \lambda^\top)^\top$ such that
\begin{equation}\label{eq:IETISystem}
	\begin{pmatrix}
		\widetilde{A}    &                   & \widetilde{B}^\top  \\
		& \Psi^\top A \Psi  & \Psi^\top B^\top    \\
		\widetilde{B}    & B \Psi            &                \\
	\end{pmatrix}
	\begin{pmatrix}
		\widetilde{\underline{u}}  \\
		\underline{u}_\Pi  \\
		\underline{\lambda}  \\
	\end{pmatrix}
	=
	\begin{pmatrix}
		\widetilde{\underline{f}}  \\
		\Psi^\top \underline{f}  \\
		0  \\
	\end{pmatrix}
	.
\end{equation}

This problem is equivalent to the original
problem~\eqref{discreteVarProb}, cf. \cite{MandelDohrmannTezaur:2005a}.
\begin{remark}
	In this paper, we follow the approach to eliminate the primal degrees
	of freedom, which is a commonly used approach for handling the
	corner values in actual implementations. 
	Alternatively, one can incorporate the primal constraints using Lagrange
	multipliers, which is a common approach if edge averages
	are used as primal degrees of freedom. Certainly, this approach is
	also possible in the framework of this paper. Here, we would obtain the
	formulation~\eqref{eq:IETISystem}, however with the choice
	\[
	\widetilde{A}^{(k)}
	=
	\begin{pmatrix}
		A_{\mathrm I \mathrm I}^{(k)} & A_{\mathrm I\Delta}^{(k)} & A_{\mathrm I\Pi}^{(k)} & 0\\
		A_{\Delta \mathrm I}^{(k)} & A_{\Delta\Delta}^{(k)} & A_{\Delta\Pi}^{(k)} & 0 \\
		A_{\Pi \mathrm I}^{(k)} & A_{\Pi\Delta}^{(k)} & A_{\Pi\Pi}^{(k)} & I \\
		0 & 0 & I & 0
	\end{pmatrix}
	,\quad
	\raisebox{1.2em}{$
		\widetilde B^{(k)}
		=
		\underbrace{
			\begin{pmatrix}
				B_{\mathrm I}^{(k)} & B_{\Delta}^{(k)} & B_{\Pi}^{(k)} & 0
		\end{pmatrix}}_{\displaystyle =
			\begin{pmatrix}
				0 & B_{\Delta}^{(k)} & 0 & 0
			\end{pmatrix}
		},
		$},
	\quad
	\widetilde{\underline f}_e^{(k)}
	=
	\begin{pmatrix}
		\underline f_{\mathrm I}^{(k)} \\
		\underline f_{\Delta}^{(k)} \\
		\underline f_{\Pi}^{(k)} \\
		0 
	\end{pmatrix}.
	\]
	The matrix $(0\;0\;I)$ in the definition of $\widetilde{A}^{(k)}$
	here is often called $C^{(k)}$.
	
	In theory papers, cf. \cite{MandelDohrmannTezaur:2005a}, a
	FETI-DP or IETI-DP system is often written down in the equivalent
	skeleton formulation, which corresponds to the choice
	\[
	\widetilde{A}^{(k)}
	=
	\begin{pmatrix}
		A_{\Delta\Delta}^{(k)} - A_{\Delta \mathrm I}^{(k)} (A_{\mathrm I \mathrm I}^{(k)})^{-1} A_{\mathrm I \Delta}^{(k)}
		& A_{\Delta\Pi}^{(k)} - A_{\Delta \mathrm I}^{(k)} (A_{\mathrm I \mathrm I}^{(k)})^{-1} A_{\mathrm I \Pi}^{(k)}
		& 0
		\\
		A_{\Pi\Delta}^{(k)} - A_{\Pi \mathrm I}^{(k)} (A_{\mathrm I \mathrm I}^{(k)})^{-1} A_{\mathrm I \Delta}^{(k)}
		& A_{\Pi\Pi}^{(k)} - A_{\Pi \mathrm I}^{(k)} (A_{\mathrm I \mathrm I}^{(k)})^{-1} A_{\mathrm I \Pi}^{(k)}
		& I \\
		0 &  I & 0
	\end{pmatrix}
	,
	\]
	\[
	\widetilde B^{(k)}
	=
	\begin{pmatrix}
		B_{\Delta}^{(k)} & B_{\Pi}^{(k)} & 0
	\end{pmatrix}
	=
	\begin{pmatrix}
		B_{\Delta}^{(k)} & 0 & 0
	\end{pmatrix},
	\quad\mbox{and}\quad
	\widetilde{\underline f}_e^{(k)}
	=
	\begin{pmatrix}
		\underline f_{\Delta}^{(k)} - A_{\Delta \mathrm I}^{(k)}(A_{\mathrm I \mathrm I}^{(k)})^{-1} \underline f_{\mathrm I}^{(k)} \\
		\underline f_{\Pi}^{(k)} - A_{\Pi \mathrm I}^{(k)}(A_{\mathrm I \mathrm I}^{(k)})^{-1} \underline f_{\mathrm I}^{(k)}\\
		0 
	\end{pmatrix}.
	\]
	
\end{remark}

\subsection{Solving the IETI system}\label{subs:4:5}

By applying a block-Gaussian elimination to~\eqref{eq:IETISystem},
we obtain the Schur complement equation 
\begin{equation}
	\label{IETIProblem}
	F \; \underline{\lambda} = \underline{d},
\end{equation}
for the Lagrange multipliers $\underline{\lambda}$,
where
\begin{align}
	\label{eq:IETI-matrix}
	F :=  \underbrace{
		\begin{pmatrix}
			\widetilde B &  B\Psi
		\end{pmatrix}
		\begin{pmatrix}
			\widetilde A \\ & \Psi^\top A \Psi
		\end{pmatrix}^{-1}
	}_{\displaystyle F_0:=}
	\begin{pmatrix}
		\widetilde B^\top  \\ \Psi^\top B^\top 
	\end{pmatrix}
	\quad\mbox{and}\quad
	\underline{d} :=  F_0\,
	\begin{pmatrix}
		\widetilde{\underline{f}}  \\
		\Psi^\top \underline{f}  \\
	\end{pmatrix}.
\end{align}
We solve~\eqref{IETIProblem} with a preconditioned conjugate gradient (PCG) solver. Let 
us define 
$B_\Gamma := 
\begin{pmatrix} B_{\Delta}^{(k)} & B_\Pi^{(k)} \end{pmatrix} = \begin{pmatrix} B_{\Delta}^{(k)} & 0 \end{pmatrix}$ and $B_\Gamma = (B_\Gamma^{(1)},\dots,B_\Gamma^{(K)})$. The preconditioner for the PCG method is the scaled Dirichlet preconditioner $M_{\mathrm{sD}}$ defined by
\[
M_{\mathrm{sD}} := B_\Gamma D^{-1} S  D^{-1} B_\Gamma^\top,
\]
where $S = \text{diag}(S^{(1)}, \dots, S^{(K)})$ with
\[
S^{(k)} :=
\begin{pmatrix}
	A_{\Delta\Delta}^{(k)} & A_{\Delta\Pi}^{(k)} \\
	A_{\Pi\Delta}^{(k)} & A_{\Pi\Pi}^{(k)} \\
\end{pmatrix}
-
\begin{pmatrix}
	A_{\Delta \mathrm I}^{(k)} \\
	A_{\Pi \mathrm I}^{(k)}  \\
\end{pmatrix}
(A_{\mathrm I \mathrm I}^{(k)})^{-1}
\begin{pmatrix}
	A_{\mathrm I\Delta}^{(k)} & A_{\mathrm I\Pi}^{(k)}\\
\end{pmatrix}
\]
is the restriction of the overall operator $A$ to the skeleton and
$D = \text{diag}(D^{(1)}, \dots, D^{(K)})$ is a diagonal matrix
defined based on the principle of coefficient scaling: Each
coefficient $d^{(k)}_{i,i}$ of $D^{(k)}$ is assigned  
\[
d^{(k)}_{i,i} := \frac{\alpha_k + \alpha_\ell}{\alpha_\ell}
\]
for degree of freedom $i$ associated to the interface $\Gamma^{(k,\ell)}$. If
$i$ corresponds to a primal degree of freedom, then $\ell$ can be chosen
arbitrarily among the indices of the neighboring patches.

After solving the system~\eqref{IETIProblem}, the solution vectors
$\widetilde{\underline{u}}$ and $ \underline u_\Pi$ and, finally,
$\underline u$ are computed from $\underline{u}$ by means of
simple patch-local postprocessing steps.

The execution of the IETI-DP method for the dG discretization described above requires basically the same computational steps as the IETI-DP method for dG discretizations on conforming patch decompositions, see~\cite[Section 3]{SchneckenleitnerTakacs:2020} for the detailed outline of the algorithm. 

\subsection{Condition number estimate}\label{subs:4:6}

The following theorem allows to estimate the maximum number
of iterations that the PCG solver with scaled Dirichlet preconditioner
needs to reach a desired error tolerance. The condition
number (and thus the number of iterations) depends on the patch sizes,
the grid size and on the spline degrees as explicitly stated in
the theorem (the constant $C$ does not depend on these quantities).
The dependence on the grid sizes and
patch sizes is as expected for FETI-like methods. Moreover, the
dependence on gird and patch sizes and spline degree is the same
as for the continuous case in IgA, see~\cite{SchneckenleitnerTakacs:2019}.
The condition number bound is independent of the
diffusion parameters $\alpha_k$, of the number of patches $K$, of the
continuity of the spline spaces, and of the choice of the penalty
parameter $\delta$ (provided $\delta$ is large enough such that the overall
bilinear form is coercive).
The constant $C$ (and thus the condition number) also depends on
the bounds for the geometry function, on the
number of patches that meet in a vertex and the quasi uniformity
of the grids.

\begin{theorem}\label{thrm:fin}
	Provided that the IETI-DP solver is set up as outlined in the previous
	sections, 
	\begin{itemize}
		\item 	
		there is a constant $C_1>0$ such that 
		\begin{align*}
			\sup_{x\in \overline{\widehat{\Omega}}}
			\| \nabla G_k(x) \|_{\ell^2} \le C_1\, H_{k}
			\quad\text{and}\quad
			\sup_{x\in \overline{\widehat{\Omega}}}
			\| (\nabla G_k(x))^{-1} \|_{\ell^2} \le C_1\, \frac{1}{H_{k}}
		\end{align*}
		for all $k=1,\ldots,K$, where $H_k:=\mathrm{diam}(\Omega^{(k)})$,
		\item there is a constant $C_2>0$ such that 
		\begin{equation}\label{eq:neighbors}
			| \{k \, : \, \mathbf{x} \in \partial \Omega^{(k)}, k = 1, \dots, K \} | 
			\leq C_2
		\end{equation}
		holds for all vertices $\mathbf{x}$,
		\item there is a constant $C_3>0$ such that
		\begin{equation}\label{eq:min:interface}
			C_3 H_k \leq |\Gamma^{(k,\ell)}|
		\end{equation}
		holds for all $k=1,\dots,K$ and all $\ell \in \mathcal{N}_\Gamma(k)$,
		\item and the grids are quasi-uniform, i.e.,
		there is a constant $C_4>0$ such that
		\[
		\widehat{h}_{k}
		\le C_4 \; \widehat{h}_{\mathrm{min},k}
		\]
		holds for all $k=1,\ldots,K$,
	\end{itemize}
	then the condition number of the preconditioned system satisfies
	\[
	\kappa(M_{\mathrm{sD}} F) \le C\, p \; 
	\left(1+\log p+\max_{k=1,\ldots,K} \log\frac{H_{k}}{h_{k}}\right)^2
	\]
	the constant $C$ only depends on the constants $C_1$, $C_2$, $C_3$
	and $C_4$.
\end{theorem}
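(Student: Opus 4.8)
The plan is to follow the now-standard abstract framework for FETI-DP/IETI-DP analysis (as in \cite{MandelDohrmannTezaur:2005a} and its IgA adaptation in \cite{SchneckenleitnerTakacs:2019,SchneckenleitnerTakacs:2020}), reducing the condition number estimate to a single \emph{stable jump estimate} on the skeleton. Concretely, one works with the subspace $\widetilde W_\Delta \subset \widetilde W := \prod_k \widetilde W^{(k)}$ of functions that are continuous in the primal degrees of freedom, equipped with the broken bilinear form $s(\cdot,\cdot)$ induced by $S = \mathrm{diag}(S^{(k)})$. Using the fact — emphasized in Subsection~\ref{subs:4:3} — that no degree of freedom is touched by more than one Lagrange multiplier, the operators $P_D := B_\Gamma^\top B_{\Gamma,D}$ (the $D$-scaled jump projection) are well behaved, and the classical argument gives
\[
\kappa(M_{\mathrm{sD}} F) \le \sup_{w \in \widetilde W_\Delta}
\frac{s(P_D w, P_D w)}{s(w,w)}.
\]
So the whole theorem reduces to bounding $\| P_D w \|_S^2 \le C p (1+\log p + \max_k \log (H_k/h_k))^2 \, \|w\|_S^2$.

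The second step is to localize this estimate to a single interface $\Gamma^{(k,\ell)}$ and to reduce $\|\cdot\|_{S^{(k)}}$ to a discrete harmonic extension / trace-norm quantity. Here one uses that $S^{(k)}$ is the Schur complement of $A^{(k)}$ and that $A^{(k)}$ is spectrally equivalent (uniformly in $p$, $h$, the smoothness, and $\alpha_k$) to $\alpha_k$ times the $H^1$-type form with the dG penalty terms, i.e. to $d_e^{(k)}(\cdot,\cdot)$; the coercivity and boundedness of $a_h$ in the dG norm, cited from \cite{Takacs:2019b}, underlies this. Pulling everything back to the parameter domain $\widehat\Omega$ via $G_k$ (the bounds $C_1$ on $\nabla G_k$ and its inverse make this a uniform equivalence), the task becomes the familiar one: estimate the $H^{1/2}_{00}$-type norm of a finite-element/spline cutoff of a trace, against its $H^{1/2}$ norm, with the coefficient scaling $d^{(k)}_{ii}=(\alpha_k+\alpha_\ell)/\alpha_\ell$ absorbing the jumps $\alpha_k$ versus $\alpha_\ell$ in the standard FETI way. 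The factor $p(1+\log p + \log(H/h))^2$ then comes from the spline-specific trace and inverse inequalities and the discrete Sobolev (logarithmic) inequality established in \cite{SchneckenleitnerTakacs:2019}; the extra single power of $p$ is the known price of the $p$-explicit spline analysis.

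The genuinely new ingredient — and the main obstacle — is handling the \textbf{fat vertices} at a T-junction. At a regular corner only one basis function is active per patch, so the primal constraint is a pointwise vertex condition and the classical "the vertex value can be subtracted off cheaply" argument applies verbatim. At a T-junction, on the patch for which the junction is an interior point of an edge (patch $\Omega^{(1)}$ in the figure), there are $O(p)$ active basis functions, and \emph{all} of them are made primal. I would show: (i) the primal space still renders each local problem solvable, i.e. the null space of each $\widetilde A^{(k)}$ is trivial, because the fat-vertex constraints pin down the traces near the junction on all involved patches; and (ii) the "jump near the T-junction" contribution in $\|P_D w\|_S^2$ is still controlled by $\|w\|_S^2$ times the stated logarithmic factor. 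For (ii) the key point is that the $O(p)$ fat-vertex functions are supported in a region of parameter-width $O(p\,\widehat h_k)$ adjacent to the junction, so a function that is primal-continuous there differs across the artificial interface only away from the junction; one then applies a cutoff that is $1$ on this $O(p\,\widehat h)$ neighbourhood, and bounds the $H^{1/2}_{00}$-norm of the cut trace. Carrying the $p$-dependence of the cutoff width through the spline inverse inequalities — without losing more than the already-present single power of $p$ — is the delicate estimate; I expect it to mirror the edge-contribution estimates in \cite{SchneckenleitnerTakacs:2019} but with the cutoff radius $p\,\widehat h$ in place of $\widehat h$, which is exactly why the final bound matches the conforming case. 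I would also invoke \eqref{eq:neighbors} to bound the number of patches meeting at any vertex, \eqref{eq:min:interface} to guarantee that each interface is long enough that the cutoff neighbourhood does not exhaust it (this is where $C_3$ enters, and implicitly requires $H_k/h_k \gtrsim p$, which is the natural regime), and quasi-uniformity $C_4$ for the local inverse inequalities. Summing the finitely many local contributions over all interfaces and patches and taking the supremum yields the claimed bound on $\kappa(M_{\mathrm{sD}}F)$.
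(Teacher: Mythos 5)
Your overall scaffolding matches the paper: the bound $\kappa(M_{\mathrm{sD}}F)\le\sup_{u\in\widetilde W}\|B_D^\top B_\Gamma\underline u\|_S^2/\|\underline u\|_S^2$ from Mandel--Dohrmann--Tezaur, the equivalence of the Schur norm with the dG norm of discrete harmonic extensions, the coefficient scaling absorbing the jumps in $\alpha$, and the $p$-explicit trace/inverse/discrete-Sobolev inequalities from the earlier conforming analysis. The gap is exactly at the step you yourself flag as new: the control of the artificial-interface contributions at a T-junction. You do not prove it --- you write that you ``expect'' a cutoff argument with cutoff radius $p\,\widehat h$ to mirror the known edge estimates --- and the geometric premise you base it on is wrong. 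Agreement of the coefficients of the fat-vertex (primal) basis functions does \emph{not} make $u^{(k,\ell)}$ and $u^{(\ell)}$ coincide on the whole $O(p\,\widehat h)$ support region of those functions: on that region there are also non-primal basis functions active (those vanishing at the junction but supported nearby), so the two traces may differ there. What the constraints actually give is coincidence only on the outermost knot span(s) of the neighbour's mesh inside the interface, i.e.\ a region of width $\approx\widehat h_\ell$, since only there all active basis functions are non-zero at the junction. Moreover, your cutoff route would implicitly require the interface to be long compared with $p\,\widehat h$ (you concede $H_k/h_k\gtrsim p$), an assumption the theorem does not make.

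The paper closes this step without any cutoff or $H^{1/2}_{00}$ machinery. In Lemmas~\ref{lem:hhalf:on:artif:edge} and~\ref{lem:linf:on:artif:edge} one bounds $|u^{(k,\ell)}|_{H^{1/2}(\Gamma^{(k,\ell)})}$ and $|u^{(k,\ell)}|_{L^0_\infty(\Gamma^{(k,\ell)})}$ by the triangle inequality, $L_2$--$H^1$ interpolation on the interface, an inverse inequality (costing $p\,\widehat h_{k\ell}^{-1/2}$) on the span $(\widetilde\zeta_1,\widetilde\zeta_2)$ between the extreme breakpoints, and --- crucially --- the identity $\widetilde u^{(k,\ell)}=\widetilde u^{(\ell)}$ on the remaining end pieces $(a_1,a_2)\setminus(\widetilde\zeta_1,\widetilde\zeta_2)$, which follows from the fat-vertex primal constraints and transfers those pieces to the neighbour's boundary seminorm; the penalty term $\tfrac{p^2}{h_{k\ell}}\|u^{(k)}-u^{(k,\ell)}\|_{L_2}^2$ absorbs the rest. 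These lemmas, combined with the explicit form of $B_D^\top B_\Gamma$ (Lemma~\ref{lem:bbt}), the summation in Lemma~\ref{lem:4:10}, and the patchwise estimates of the conforming analysis, give the stated $p\,\Lambda^2$ bound without any lower bound on $H_k/h_k$ relative to $p$. Until you replace your conjectured cutoff estimate by an argument of this kind (or actually carry out and verify the $p$-dependent cutoff estimate, including the case of short interfaces), the proposal does not constitute a proof of the theorem.
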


In this section, we apply the IETI-DP method to a simple
magnetostatic problem. We consider the 
computational domain shown in Figure~\ref{fig:Cross section}
consisting of 272 patches representing a simplified cross section of an interior permanent magnet electric motor (IPMEM). The different colors in Fig.~\ref{fig:Cross section} denote different materials. The redbrown patches denote ferromagnetic material, e.g., iron, the yellow patches are the permanent magnets and the blue patches represent air regions and
coils made of copper (for which we use the same material parameters).

\begin{figure}[h]
	\centering
	\begin{subfigure}{.27\textwidth}
		\centering
		\includegraphics[scale=0.3]{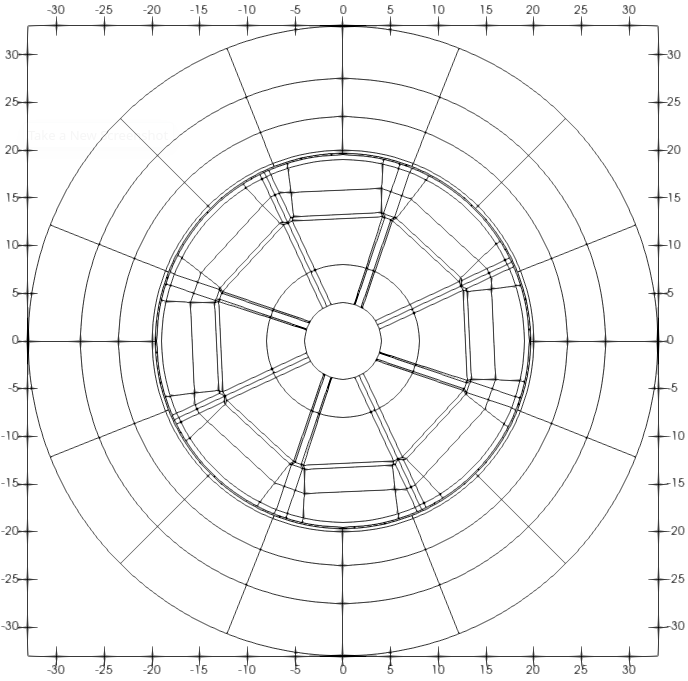}
	\end{subfigure} 
	\hspace{2.5cm}
	\begin{subfigure}{.27\textwidth}
		\centering
		\includegraphics[scale=0.3]{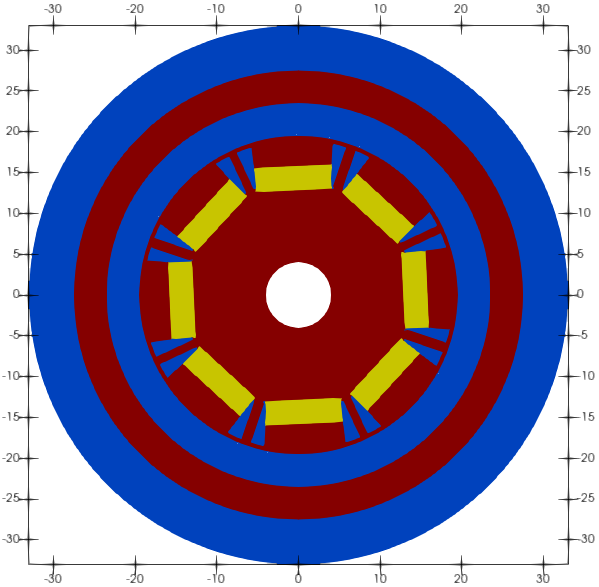}
	\end{subfigure} 
	\caption{Decomposition of cross section into patches (left),
		and their materials (right)}
	\label{fig:Cross section}
\end{figure}

The considered boundary reads formally as follows. Find $u$ such that
\[
\begin{aligned}
	- \mbox{div} (\nu(x,y) \nabla u(x,y)) & = \mbox{div}(\nu(x,y) M(x,y)) &&\qquad \mbox{for}\quad (x,y)\in\Omega \\
	u & = 0 &&\qquad \mbox{on}\quad \partial\Omega,
\end{aligned}
\]
where $\nu$ denotes the magnetic reluctivity and $M$ denotes the magnetization.
The magnetic reluctivity is $\nu_{\mathrm{ferro}}=\frac{1}{204 \pi}10^5$ on the ferromagnetic parts, $\nu_{\mathrm{mag}}=\frac{1}{4.344 \pi}10^7$ on the permanent magnets and $\nu_{\mathrm{air}}=\frac{1}{4 \pi}10^7$ on the air
and copper regions. This means that we have a jump in the order of approximately $10^4$. On each of the
permanent magnets, the magnetization $M$ is given by
\[
M = \rho_{\mathrm{mag}}\, \nu_{\mathrm{mag}}\, \textbf{n},
\]
where $\rho_{\mathrm{mag}}:=1.28$ is the 	magnetic remanence,
and $\textbf{n}$ is unit the normal vector in positive or negative
radial direction (measured from the center of the magnet), where
a positive sign is used for every second magnet and a negative sign
for every other second magnet. The magnetization $M$ vanishes
on the remainder of the domain (ferromagnetic parts, air, copper).

The cross section of the motor is modeled with NURBS and B-splines. For the coarsest discretization space, i.e., $r=0$, we use B-splines that are global polynomials and we use only splines of maximum smoothness within the patches. The subsequent refinements $r=1,2,3,\dots$ are obtained via uniform refinement steps. We solve the IETI-DP system~\eqref{IETIProblem} with the $M_\mathrm{sD}$ preconditioner that arises from the magnetostatic model problem with a PCG solver and start the iterations with zero initial vector. We stop the iteration if the $\ell_2$-norm of the residual has been decreased by a factor of $10^{-6}$ compared to the $\ell_2$-norm of the right-hand side. We use the penalty parameter $\delta = 12$ for all the numerical experiments which are carried out on the Radon1\footnote{https://www.ricam.oeaw.ac.at/hpc/} cluster located in Linz and we used the C++ library G+Smo~\cite{gismoweb}.

\begin{figure}[h]
	\centering
	\includegraphics[scale=0.3]{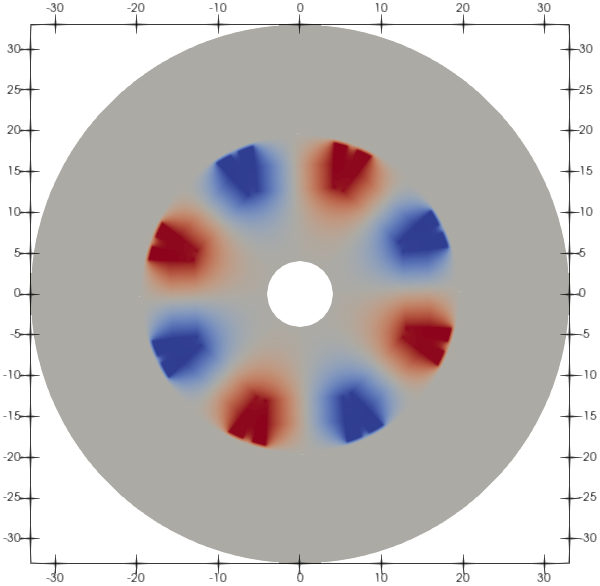}
	\caption{Solution of the linear model problem}
	\label{fig:Solution}
\end{figure}

Figure~\ref{fig:Solution} shows a typical solution to the problem.

Table~\ref{tab:IPMEM} shows the condition numbers and the iteration counts for the magnetostatic model problem. We see in this table a condition number growth with respect to $h$ as expected and we also see that the condition numbers decrease for higher polynomial degrees from refinement level $4$ on. 

\begin{table}[t]
	\newcolumntype{L}[1]{>{\raggedleft\arraybackslash\hspace{-1em}}m{#1}}
	\centering
	\renewcommand{\arraystretch}{1.25}
	\scalebox{1.}{
		\begin{tabular}{l|L{1em}L{2.em}|L{1em}L{2.em}|L{1em}L{2.em}|L{1em}L{2.em}|L{1em}L{2.em}|L{1em}L{2.em}}
			\toprule
			& \multicolumn{2}{c|}{$p=2$}
			& \multicolumn{2}{c|}{$p=3$}
			& \multicolumn{2}{c|}{$p=4$}
			& \multicolumn{2}{c|}{$p=5$}
			& \multicolumn{2}{c|}{$p=6$}
			& \multicolumn{2}{c}{$p=7$} \\
			$r$
			& it & $\kappa$
			& it & $\kappa$
			& it & $\kappa$
			& it & $\kappa$
			& it & $\kappa$
			& it & $\kappa$ \\
			\midrule
			1 & 26 &20.57 &	27 &20.46 &	27 &21.48 &	29 &24.07 &	30 &24.90 &	31 &27.00 \\
			2 & 28 &20.78 &	28 &23.13 &	30 &25.07 &	30 &26.89 &	31 &28.02 &	33 &29.69 \\
			3 & 32 &24.69 &	33 &27.46 &	34 &29.43 &	33 &30.83 &	32 &31.78 &	34 &33.23 \\
			4 & 43 &71.05 &	43 &64.84 &	44 &57.24 &	42 &49.15 &	41 &42.82 &	41 &39.25 \\
			5 & 47 &86.98 &	48 &87.11 &	47 &83.64 &	48 &79.40 &	47 &76.21 &	47 &70.17 \\
			6 & 50 &97.84 &	51 &99.09 &	50 &94.94 &	50 &96.83 &	52 &93.96 &	51 &91.89 \\
			\bottomrule
	\end{tabular}}
	\captionof{table}{Iterations (it) and condition numbers ($\kappa$); rotation angle of $\frac{5}{36}\pi$
		\label{tab:IPMEM}}
\end{table}

Table~\ref{tab:IPM_coef} reports on the robustness of the IETI-DP solver with respect to coefficient jumps. For the numerical tests we assign to the ferromagnetic patches the hypothetical reluctivity $10^{\mbox{j}}$ for $r = 5$ refinement steps. We see in the table that the condition number is almost independent of the coefficient jumps. 

\begin{table}[t]
	\newcolumntype{L}[1]{>{\raggedleft\arraybackslash\hspace{-1em}}m{#1}}
	\centering
	\renewcommand{\arraystretch}{1.25}
	\scalebox{1.0}{
		\begin{tabular}{l|L{1em}L{2.em}|L{1em}L{2.em}|L{1em}L{2.em}|L{1em}L{2.em}|L{1em}L{2.em}|L{1em}L{2.em}}
			\toprule
			& \multicolumn{2}{c|}{$p=2$}
			& \multicolumn{2}{c|}{$p=3$}
			& \multicolumn{2}{c|}{$p=4$}
			& \multicolumn{2}{c|}{$p=5$}
			& \multicolumn{2}{c|}{$p=6$}
			& \multicolumn{2}{c}{$p=7$} \\
			$j$
			& it & $\kappa$
			& it & $\kappa$
			& it & $\kappa$
			& it & $\kappa$
			& it & $\kappa$
			& it & $\kappa$ \\
			\midrule
			0 & 51 &88.85 &	50 &87.86 &	50 &81.78 &	48 &79.22 &	48 &73.44 &	49 &71.84 \\
			1 & 48 &89.63 &	47 &83.90 &	47 &81.45 &	47 &78.33 &	48 &76.15 &	47 &69.27 \\
			2 & 47 &82.00 &	47 &86.48 &	47 &83.74 &	47 &79.03 &	47 &74.39 &	47 &70.85 \\
			3 & 48 &87.90 &	49 &84.83 &	48 &81.92 &	48 &79.21 &	47 &75.37 &	47 &70.57 \\
			4 & 48 &81.66 &	49 &82.12 &	49 &80.32 &	48 &77.07 &	48 &72.57 &	47 &67.99 \\
			\bottomrule
	\end{tabular}}
	\captionof{table}{Iterations (it) and condition numbers ($\kappa$); $\nu$-robustness
		\label{tab:IPM_coef}}
\end{table}

In Table~\ref{tab:angle dependence}, we see the dependence of the iteration and condition numbers with respect to the angle of rotation of the motor. We choose three different angle positions $\varphi$ at $\frac{5}{36}\pi$, $\frac{6}{36}\pi$ and $\frac{7}{36}\pi$. We observe from this table that the iterations and condition numbers decrease for a larger angle $\varphi$.

\begin{table}[t]
	\newcolumntype{L}[1]{>{\raggedleft\arraybackslash\hspace{-1em}}m{#1}}
	\centering
	\renewcommand{\arraystretch}{1.25}
	\begin{tabular}{l|L{1em}L{1.8em}|L{1em}L{1.8em}|L{1em}L{1.8em}|L{1em}L{1.8em}|L{1em}L{1.8em}|p{3.5em}}
		\toprule
		%
		& \multicolumn{2}{c|}{$p=3$}
		& \multicolumn{2}{c|}{$p=4$}
		& \multicolumn{2}{c|}{$p=5$}
		& \multicolumn{2}{c|}{$p=6$}
		& \multicolumn{2}{c|}{$p=7$}
		& \\
		$r$
		& it & $\kappa$
		& it & $\kappa$
		& it & $\kappa$
		& it & $\kappa$
		& it & $\kappa$
		& \multicolumn{1}{c}{$\varphi$}  \\
		\midrule
		$4$  &	43 &64.84 &	44 &57.24 &	42 &49.15 &	41 &42.82 &	41 &39.25  
		& \rdelim\}{3}{3em}[\; \normalfont $\frac{5}{36}\pi$]  \\
		$5$  &	48 &87.11 &	47 &83.64 &	48 &79.40 &	47 &76.21 &	47 &70.17 \\
		$6$  &	51 &99.09 &	50 &94.94 &	50 &96.83 &	52 &93.96 &	51 &91.89 \\
		$4$  &	37 &33.77 &	37 &34.14 &	37 &35.33 &	38 &36.48 &	38 &37.55
		& \rdelim\}{3}{3em}[\; \normalfont $\frac{6}{36}\pi$] \\
		$5$  &	43 &55.63 &	43 &52.20 &	42 &49.48 &	42 &46.78 &	42 &45.63 \\
		$6$  &	44 &61.54 &	44 &59.93 &	44 &59.64 &	46 &58.46 &	45 &56.42 \\
		$4$  &	36 &31.88 &	37 &33.66 &	37 &35.01 &	37 &36.23 &	38 &37.60 
		& \rdelim\}{3}{3em}[\; \normalfont $\frac{7}{36}\pi$]  \\
		$5$  &	39 &36.89 &	40 &38.35 &	39 &39.50 &	39 &40.56 &	40 & 41.68 \\
		$6$  &	41 &41.82 &	41 &42.96 &	41 &44.30 &	42 &45.45 &	42 & 46.55 \\
		\bottomrule
	\end{tabular}
	\captionof{table}{Iteration counts (it) and condition numbers ($\kappa$); rotation dependence
		\label{tab:angle dependence}}
\end{table}

The Table~\ref{tab:parallel performance} shows the solving times in seconds (sec.) required to solve the IETI-DP system with the number of computing cores given in the column $\mathrm{proc}$. We increase the number of cores from $2$ to $16$. We see in this table a very good scaling behavior of the algorithm. 

\begin{table}[t]
	\newcolumntype{L}[1]{>{\raggedleft\arraybackslash\hspace{-1em}}m{#1}}
	\centering
	\renewcommand{\arraystretch}{1.25}
	\begin{tabular}{r|r|L{2.8em}|L{2.8em}|L{2.8em}|L{2.8em}|L{2.8em}}
		\toprule
		\multicolumn{1}{c|}{}
		& \multicolumn{1}{c|}{}
		& \multicolumn{1}{c|}{$p=3$}
		& \multicolumn{1}{c|}{$p=4$}
		& \multicolumn{1}{c|}{$p=5$}
		& \multicolumn{1}{c|}{$p=6$}
		& \multicolumn{1}{c}{$p=7$} \\
		\multicolumn{1}{c|}{$\mathrm{proc}$}
		& \multicolumn{1}{c|}{$r$}
		& \multicolumn{1}{c|}{time}
		& \multicolumn{1}{c|}{time}
		& \multicolumn{1}{c|}{time}
		& \multicolumn{1}{c|}{time}
		& \multicolumn{1}{c}{time} \\
		\midrule
		2 &4 &	8.33 &	10.97 &	11.78 &	13.23 &	16.67 \\
		4 &4 &	4.26 &	5.62 &	6.09 &	6.85 &	8.57 \\
		8 &4 &	2.14 &	2.89 &	3.07 &	3.59 &	4.40 \\
		16 &4 &	1.09 &	1.46 &	1.58 &	1.87 &	2.23 \\
		2 &5 &	34.46 &	44.5 &	58.43 &	74.14 &	93.90 \\
		4 &5 &	17.57 &	23.08 &	29.84 &	38.27 &	48.52 \\
		8 &5 &	8.91 &	12.19 &	15.72 &	19.93 &	25.32 \\
		16 &5 &	4.57 &	6.29 &	7.90 &	10.08 &	12.86 \\
		2 &6 &	179.23 & 231.30 &	327.64 &	403.44 &	453.04 \\
		4 &6 &	91.35 &	119.14 & 165.17 &	203.06 &	232.80 \\
		8 &6 &	47.03 &	61.67 &	87.27 &	104.03 &	120.06 \\
		16 &6 &	23.93 &	31.27 &	45.38 &	53.73 &	61.79 \\
		2 &7 &	990.49 & 1356.09 & 1681.87 &	2692.77 &	OoM \\
		4 &7 &	499.52 & 700.10 & 873.77 &	1390.72 &	1567.99 \\
		8 &7 &	257.08 & 364.11 & 443.77 &	720.69 &	809.26 \\
		16 &7 &	129.83 & 190.08 & 227.36 &	366.35 &	413.41 \\
		\bottomrule
	\end{tabular}
	\captionof{table}{Time in seconds to solve~\eqref{IETIProblem} 
		\label{tab:parallel performance}}
\end{table}

\section{Conclusions}
\label{sec:6}
In this paper, we have constructed a IETI-DP algorithm for computational domains with a non-matching decomposition into patches. We have adapted the idea of using corner values as primal degrees of freedom (Alg.~A) from~\cite{SchneckenleitnerTakacs:2020} according to our requirements. In this paper, we have generalized this idea to T-junctions: We add basis functions that are supported on a vertex to the primal space. For this choice, we obtain the same $h$ and $p$-explicit condition number bounds as in~\cite{SchneckenleitnerTakacs:2020}.

\section*{Appendix}
\renewcommand{\thesection}{\Alph{section}}
\setcounter{section}{1}
\label{sec:7}
In the appendix, we give a proof of Theorem~\ref{thrm:fin}.
Throughout this appendix, 
we use the notation $a \lesssim b$ if there is a constant
$c>0$ that only depends on the constants $C_1$, $C_2$, $C_3$
and $C_4$ from Theorem~\ref{thrm:fin}
such that $a\le cb$. Moreover, we write $a\eqsim b$ if $a\lesssim b\lesssim a$.

When it is clear from the context, we do not denote the restriction of
a function to an interface explicitly, so we write for example
$\|u^{(k)}\|_{L_2(\Gamma^{(k,\ell)})}$
instead of
$\|u^{(k)}|_{\Gamma^{(k,\ell)}}\|_{L_2(\Gamma^{(k,\ell)})}$.

Following the usual approach, for the analysis, we need to introduce
the skeleton representation of the solution which is obtained by eliminating the interior degrees of freedom. By eliminating the
interior degrees of freedom from the spaces $V^{(k)}$, we obtain the
space $W^{(k)}:=\{ v|_{\partial \Omega^{(k)}} \,:\, v \in V^{(k)}\}$.
The introduction of the skeleton representation has no influence on
the function spaces on the artificial interfaces, thus we define
$W^{(k,\ell)}:=V^{(k,\ell)}$. Based on these choices, we define
analogously to $V$ and $V_e^{(k)}$ the function spaces
\[
W := W_e^{(1)} \times \cdots \times W_e^{(K)},
\quad\mbox{and}\quad
W_e^{(k)} := W^{(k)}
\times \prod_{\ell \in \mathcal{N}_\Gamma(k)} W^{(k,\ell)}.
\]
Analogously to \eqref{def:representation}, a function $w_e^{(k)}\in W_e^{(k)}$ has the form $w_e^{(k)} = \left(w^{(k)}, \left(w^{(k,\ell)}\right)_{\ell\in \mathcal{N}_\Gamma(k)} \right)$, where
$w^{(k)} \in W^{(k)}$ and $w^{(k,\ell)}\in W^{(k,\ell)}$.
A basis for $W_e^{(k)}$ is canonically defined by choosing the
traces of the basis functions of the basis of $V^{(k)}$ (for which
the trace does not vanish) and the basis functions of the bases
of $V^{(k,\ell)} = W^{(k,\ell)}$.
Finally $\widetilde{W}\subseteq W$ is the subspace of functions
where the primal constraints are satisfied, i.e., the coefficients
for the vertex basis functions agree.

As in \cite{SchneckenleitnerTakacs:2019}, we define the seminorm
\[
|v|_{L_\infty^0(T)}:= \inf_{c\in \mathbb R} \| v - c\|_{L_\infty(T)}
\]
for a continuous function $v$ over the set $T \subset \mathbb{R}^2$.
Moreover, we use the standard seminorm
\[
|v|_{H^{1/2}(T)}:= \int_T \int_T
\frac{(v(x)-v(y))^2}{\|x-y\|_{\ell^2}^2}
\,\mathrm{d}y
\,\mathrm{d}x
\]
for $T$ being the boundary or an edge of a patch.

The following lemma allows to estimate the action
of the matrix $B^\top_D B_\Gamma$, where we define $B_D:= B_\Gamma D^{-1}$.

\begin{lemma}\label{lem:bbt}
	Let $u=(u_e^{(1)},\cdots,u_e^{(K)})=((u^{(1)},(u^{(1,\ell)})_{\ell\in \mathcal N_\Gamma(1)}),\ldots)\in \widetilde W$ with coefficient vector $\underline u$
	and let $w=(w_e^{(1)},\cdots,w_e^{(K)})=((w^{(1)},(w^{(1,\ell)})_{\ell\in \mathcal N_\Gamma(1)}),\ldots)\in \widetilde W$ with coefficient vector $\underline w$
	be such that $\underline w = B_D^\top B_\Gamma \underline u$.
	Then, we have for each patch $\Omega^{(k)}$ and each interface $\Gamma^{(k,\ell)}$ that 
	\[
	w^{(k)}|_{\Gamma^{(k,\ell)}} = \frac{\alpha_\ell}{\alpha_k + \alpha_\ell} 
	(u^{(k)}|_{\Gamma^{(k,\ell)}} - u^{(\ell,k)}), \;\;
	w^{(k,\ell)} = \frac{\alpha_\ell}{\alpha_k + \alpha_\ell} 
	(u^{(k,\ell)} - u^{(\ell)}|_{\Gamma^{(k,\ell)})}).
	\]
\end{lemma}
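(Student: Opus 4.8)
The plan is to unwind the definitions of the matrices $B_\Gamma$, $D$, and $B_D = B_\Gamma D^{-1}$ and to track what the composite operator $B_D^\top B_\Gamma$ does to a single coefficient. Recall that the rows of $B_\Gamma$ are indexed by the Lagrange multipliers, and each such row encodes one constraint of the form ``coefficient on the $\Omega^{(k)}$ side of $\Gamma^{(k,\ell)}$ equals coefficient on the artificial interface $V^{(\ell,k)}$'', i.e. each row of $B_\Gamma$ has exactly one $+1$ and one $-1$. Crucially, as noted right after the definition of $B$ in Subsection~\ref{subs:4:3}, the chosen (fat-vertex) primal space guarantees that no degree of freedom is touched by more than one constraint; this means that when we form $B_\Gamma^\top B_\Gamma$ and the related products, there is no ``overlap'' and the action decouples interface by interface, indeed coefficient by coefficient.

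First I would fix one interface $\Gamma^{(k,\ell)}$ and one non-primal basis index $i$ associated with it. There is exactly one Lagrange multiplier $\mu$ coupling the $\Omega^{(k)}$-side coefficient $\underline u^{(k)}_i$ with the $\Omega^{(\ell,k)}$-side coefficient $\underline u^{(\ell,k)}_i$ (and, symmetrically, one coupling $\underline u^{(k,\ell)}_i$ with $\underline u^{(\ell)}_i$). Applying $B_\Gamma$ to $\underline u$ yields in the row $\mu$ the value $\underline u^{(k)}_i - \underline u^{(\ell,k)}_i$ (jump of the coefficients). Then applying $B_D^\top = D^{-1} B_\Gamma^\top$ spreads this value back to the two participating degrees of freedom, scaled by the corresponding diagonal entries of $D^{-1}$: the entry sitting on the $\Omega^{(k)}$ side carries the scaling $d^{(k)}_{i,i}{}^{-1} = \alpha_\ell/(\alpha_k+\alpha_\ell)$, and the entry on the $V^{(\ell,k)}$ side carries $d^{(\ell,k)}_{i,i}{}^{-1}$, which is defined using the jump at the \emph{same} interface seen from the neighbour, namely $\alpha_k/(\alpha_k+\alpha_\ell)$ — but this second contribution belongs to the coefficient vector $\underline w$ restricted to patch $\Omega^{(\ell)}$, not to patch $\Omega^{(k)}$, so it appears in the formula for $w^{(\ell,k)}$ rather than in $w^{(k)}|_{\Gamma^{(k,\ell)}}$. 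Reading off the resulting coefficient on the $\Omega^{(k)}$ side gives exactly
\[
\underline w^{(k)}_i = \frac{\alpha_\ell}{\alpha_k+\alpha_\ell}\bigl(\underline u^{(k)}_i - \underline u^{(\ell,k)}_i\bigr),
\]
and since the bases of $V^{(k)}|_{\Gamma^{(k,\ell)}}$ and of $V^{(\ell,k)}$ coincide, this coefficient identity is equivalent to the claimed function identity $w^{(k)}|_{\Gamma^{(k,\ell)}} = \tfrac{\alpha_\ell}{\alpha_k+\alpha_\ell}(u^{(k)}|_{\Gamma^{(k,\ell)}} - u^{(\ell,k)})$. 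The statement for $w^{(k,\ell)}$ follows by the symmetric argument, swapping the roles of the interface side and the artificial-interface side.

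The remaining care points — which I expect to be the main (though modest) obstacle — are bookkeeping: (i) making sure the sign convention in $B_\Gamma$ is used consistently so that the jump comes out with the orientation claimed in the lemma; (ii) verifying that the primal degrees of freedom drop out cleanly, which is why the statement only concerns the non-primal coefficients and uses $B_\Gamma = (B_\Delta\;\;0)$, so that $w$ indeed lands in $\widetilde W$ (its primal coefficients are zero, hence trivially satisfy the primal constraints); and (iii) confirming that the ``no degree of freedom is affected by more than one constraint'' property is exactly what licenses treating each coefficient independently, with no cross terms when forming $B_D^\top B_\Gamma$. Once these are in place, the proof is a direct computation with $2\times 2$ blocks and needs no functional-analytic machinery.
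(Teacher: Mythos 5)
Your coefficient-level unwinding of $B_D^\top B_\Gamma$ is essentially the paper's route, and the bookkeeping you do for the non-primal degrees of freedom (one $\pm1$ pair per row, no degree of freedom hit by two constraints, scaling $d^{(k)}_{i,i}{}^{-1}=\alpha_\ell/(\alpha_k+\alpha_\ell)$ for every dof of the $k$-th local space associated with $\Gamma^{(k,\ell)}$) is correct. However, there is a genuine gap at the step where you declare the non-primal coefficient identity ``equivalent to the claimed function identity'': the primal (fat-vertex) basis functions do \emph{not} vanish on $\Gamma^{(k,\ell)}$ — they are chosen precisely because they are non-zero at the vertex, and at a T-junction there are $\mathcal{O}(p)$ of them with non-trivial traces along the interface. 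They carry no Lagrange multipliers, so the corresponding coefficients of $w$ are zero; but on the right-hand side $\tfrac{\alpha_\ell}{\alpha_k+\alpha_\ell}\bigl(u^{(k)}|_{\Gamma^{(k,\ell)}}-u^{(\ell,k)}\bigr)$ these basis functions contribute terms proportional to $u_i^{(k)}-u_j^{(\ell,k)}$, which for a general $u\in W$ do not vanish. Your remark that ``the statement only concerns the non-primal coefficients'' is therefore not correct: taken literally, your computation establishes only the paper's intermediate representation~\eqref{eq:bbtsplit}, i.e.\ the claimed formula \emph{up to} a correction term supported on the primal basis functions, and you use the hypothesis $u\in\widetilde W$ nowhere for the identity itself (only to note that $w\in\widetilde W$, which is a side remark, not the point).

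The missing — short but essential — step is exactly the paper's concluding argument: because $u\in\widetilde W$, the coefficients in each primal pair agree, $u_i^{(k)}=u_j^{(\ell,k)}$, and the traces of the corresponding basis functions on $\Gamma^{(k,\ell)}$ coincide, so the correction term in~\eqref{eq:bbtsplit} cancels and the clean formulas of the lemma follow. Without this use of the hypothesis the lemma is false (it fails for $u\in W\setminus\widetilde W$), so this is not mere bookkeeping; once it is inserted, your argument coincides with the paper's proof.
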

\begin{proof}
	As in the proof of \cite[Lemma~4.3]{SchneckenleitnerTakacs:2020},
	we have
	\begin{equation}\label{eq:bbtsplit}
		\begin{aligned}
			&w^{(k)}|_{\Gamma^{(k,\ell)}} =
			\frac{\alpha_\ell}{\alpha_k + \alpha_\ell} 
			\\&\hspace{5em}
			\left( u^{(k)}|_{\Gamma^{(k,\ell)}} - u^{(\ell,k)}
			- 
			\sum_{T \in \Gamma^{(k,\ell)}} \sum_{(i,j)\in \mathcal{B}_T(k,\ell)} \left(
			u_i^{(k)}\varphi_i^{(k)}|_{\Gamma^{(k,\ell)}} - u_j^{(\ell,k)} \varphi_j^{(\ell,k)} \right)
			\right) 
			,
			\\
			&w^{(k,\ell)}|_{\Gamma^{(k,\ell)}} =
			\frac{\alpha_\ell}{\alpha_k + \alpha_\ell} 
			\\&\hspace{5em}
			\left( u^{(k,\ell)} - u^{(\ell)}|_{\Gamma^{(k,\ell)}}
			-  
			\sum_{T \in \Gamma^{(k,\ell)}} \sum_{(i,j)\in \mathcal{B}_T(\ell,k)} \left(
			u_i^{(k,\ell)}\varphi_i^{(k,\ell)} - u_j^{(\ell)} \varphi_j^{(\ell)}|_{\Gamma^{(k,\ell)}} \right)
			\right)
			,
		\end{aligned}
	\end{equation}
	where $\varphi_i^{(k)}$ denotes a basis function of the basis
	of $W^{(k)}$ and $\varphi_i^{(k,\ell)}$ denotes a basis function
	of the basis of $W^{(k,\ell)}$. The set $\mathcal{B}_T(k,\ell)$
	contains the pairs of indices of those basis functions in the bases
	for $W^{(k)}$ and $W^{(\ell,k)}$ which are subject to a primal
	constraint.
	
	We obtain the representation~\eqref{eq:bbtsplit} since the
	coefficients corresponding to all basis functions on the common edge
	$\Gamma^{(k,\ell)}$ are equal to $\pm\alpha_\ell/(\alpha_k+\alpha_\ell)$,
	except to the basis functions that correspond to the primal degrees
	of freedom. For the latter, the corresponding coefficients are $0$
	since the primal degrees of freedoms are not subject to the jump matrix.
	Thus, we subtract the latter.
	
	Note that $u\in \widetilde W$, which means that it satisfies the
	primal constraints. Hence the sum over the indices in
	$\mathcal{B}_T(k,\ell)$ 
	vanishes. Therefore, we immediately obtain the desired result.
\end{proof}

Lemma~\ref{lem:bbt} and the triangle inequality immediately yield
\begin{equation}\label{eq:bbt}
	\begin{aligned}
		\|w^{(k)} - w^{(k,\ell)} \|_{L_{2}(\Gamma^{(k,\ell)})}^2 
		& \lesssim 
		\frac{\alpha_\ell^2}{(\alpha_k + \alpha_\ell)^2} \left( \|u^{(k)} - u^{(k,\ell)}\|^2_{L_{2}(\Gamma^{(k,\ell)})}
		+ \|u^{(\ell)}- u^{(\ell,k)} \|^2_{L_{2}(\Gamma^{(k,\ell)})} \right),\\
		|w^{(k)}|_{H^{1/2}(\Gamma^{(k,\ell)})}^2  &\lesssim
		\frac{\alpha_\ell^2}{(\alpha_k + \alpha_\ell)^2} \left( |u^{(k)}|_{H^{1/2}(\Gamma^{(k,\ell)})}^2 +
		|u^{(\ell,k)}|_{H^{1/2}(\Gamma^{(k,\ell)})}^2 \right), \\
		|w^{(k)}|_{L_\infty^0(\Gamma^{(k,\ell)})}^2
		& \lesssim
		\frac{\alpha_\ell^2}{(\alpha_k + \alpha_\ell)^2} \left( |u^{(k)}|_{L_\infty^0(\Gamma^{(k,\ell)})}^2 +
		|u^{(\ell,k)}|_{L_\infty^0(\Gamma^{(k,\ell)})}^2  \right).
	\end{aligned}
\end{equation}

Here and in what follows, we write
$\widehat h_{k\ell} := \text{min} \{\widehat h_{k}, \widehat h_{\ell} \}$
and
$h_{k\ell} := \text{min} \{h_{k}, h_{\ell} \}$.

We estimate contributions from the artificial interfaces in the $H^{1/2}$- and $L_\infty^0$-seminorms. We start with the $H^{1/2}$ estimate.
\begin{lemma}\label{lem:hhalf:on:artif:edge}
	Let $u \in \widetilde{W}$. Then, the estimate
	\[
	\begin{aligned}
		&
		|u^{(k,\ell)}|_{H^{1/2}{(\Gamma^{(k,\ell)})}}^2
		&  \lesssim 
		|u^{(k)}|_{H^{1/2}{(\partial \Omega^{(k)})}}^2
		+
		|u^{(\ell)}|_{H^{1/2}{(\partial \Omega^{(\ell)})}}^2
		+ \frac{p^2}{h_{k\ell}} \| u^{(k,\ell)} - u^{(k)} \|_{L_2(\Gamma^{(k,\ell)})}^2 
	\end{aligned}
	\]
	holds.
\end{lemma}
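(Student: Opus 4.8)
The plan is to reduce the estimate to the one–dimensional, $p$–explicit inverse inequality for splines on the \emph{individual} (quasi‑uniform, by assumption $C_4$) patch grids, and to avoid ever working directly on the common refinement of the two patch grids along $\Gamma^{(k,\ell)}$, which in general is \emph{not} quasi‑uniform.

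First I would peel off $u^{(k)}$ by the triangle inequality,
\[
|u^{(k,\ell)}|_{H^{1/2}(\Gamma^{(k,\ell)})}^2
\;\lesssim\; |u^{(k)}|_{H^{1/2}(\Gamma^{(k,\ell)})}^2
\;+\; |u^{(k,\ell)}-u^{(k)}|_{H^{1/2}(\Gamma^{(k,\ell)})}^2 ,
\]
and bound the first term by $|u^{(k)}|_{H^{1/2}(\partial\Omega^{(k)})}^2$: since the distance in the seminorm is Euclidean, the defining double integral over $\Gamma^{(k,\ell)}\times\Gamma^{(k,\ell)}$ is simply part of the one over $\partial\Omega^{(k)}\times\partial\Omega^{(k)}$. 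For the remaining ``jump'' term, with $w:=u^{(k,\ell)}-u^{(k)}$, I would split the double integral defining $|w|_{H^{1/2}(\Gamma^{(k,\ell)})}$ into the far field $\{|x-y|\ge h_{k\ell}\}$ and the near field $\{|x-y|<h_{k\ell}\}$. The far field is elementary and uses no spline structure: from $(w(x)-w(y))^2\le 2w(x)^2+2w(y)^2$ and $\int_{y\in\Gamma^{(k,\ell)},\,|x-y|\ge h_{k\ell}}|x-y|^{-2}\,\mathrm dy\le 2/h_{k\ell}$ one gets a bound by $h_{k\ell}^{-1}\|w\|_{L_2(\Gamma^{(k,\ell)})}^2\le p^2 h_{k\ell}^{-1}\|w\|_{L_2(\Gamma^{(k,\ell)})}^2$.

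The near field is where the ``fat vertex'' construction is used. The key structural observation is that every basis function of $W^{(k,\ell)}$ which is \emph{not} a fat‑vertex (primal) basis function is active on $\Gamma^{(k,\ell)}$ but vanishes at both endpoints of $\Gamma^{(k,\ell)}$ (which are vertices), and for a degree‑$p$ B‑spline this forces its whole support to lie inside $\Gamma^{(k,\ell)}$. Consequently $u^{(k,\ell)}-u^{(\ell)}|_{\Gamma^{(k,\ell)}}$, whose primal coefficients cancel because $u\in\widetilde W$, is (the restriction of) a spline $\rho$ on the quasi‑uniform boundary grid of $\Omega^{(\ell)}$ with support inside $\Gamma^{(k,\ell)}$; in particular $\|\rho\|_{L_2(\Gamma^{(k,\ell)})}\eqsim\|\rho\|_{L_2(\partial\Omega^{(\ell)})}$ with no ``sliver'' loss, and the $p$‑explicit spline inverse inequality gives $|\rho|_{H^{1/2}(\Gamma^{(k,\ell)})}^2\le|\rho|_{H^{1/2}(\partial\Omega^{(\ell)})}^2\lesssim (p^2/h_\ell)\|\rho\|_{L_2(\Gamma^{(k,\ell)})}^2\le (p^2/h_{k\ell})\|\rho\|_{L_2(\Gamma^{(k,\ell)})}^2$. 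Writing $u^{(k,\ell)}=u^{(\ell)}+\rho$ on $\Gamma^{(k,\ell)}$, the $H^{1/2}$–seminorm of $u^{(\ell)}$ is absorbed into $|u^{(\ell)}|_{H^{1/2}(\partial\Omega^{(\ell)})}$ by monotonicity, and $\rho=w-(u^{(k)}-u^{(\ell)})$ lets one trade $\|\rho\|_{L_2(\Gamma^{(k,\ell)})}$ for $\|w\|_{L_2(\Gamma^{(k,\ell)})}$ up to the patch jump. The residual from the patch jump $u^{(k)}-u^{(\ell)}$ — which vanishes at the endpoints of $\Gamma^{(k,\ell)}$ because the vertex values agree for $u\in\widetilde W$ — is then again split into an $H^{1/2}$ part, bounded by $|u^{(k)}|_{H^{1/2}(\partial\Omega^{(k)})}^2+|u^{(\ell)}|_{H^{1/2}(\partial\Omega^{(\ell)})}^2$, and an $L_2$ part treated by a fractional Poincaré–type argument exploiting that it vanishes at $\partial\Gamma^{(k,\ell)}$.

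The step I expect to be the main obstacle is exactly this treatment near the T‑junctions: there $\Gamma^{(k,\ell)}$ is only a \emph{sub‑edge} of $\partial\Omega^{(\ell)}$, the restriction of the patch‑$\ell$ grid to $\Gamma^{(k,\ell)}$ need not be quasi‑uniform, and the naive estimate $|v|_{H^{1/2}(\Gamma^{(k,\ell)})}^2\lesssim (p^2/h_{k\ell})\|v\|_{L_2(\Gamma^{(k,\ell)})}^2$ is simply false for a B‑spline whose support meets $\Gamma^{(k,\ell)}$ only in a thin sliver. The whole argument therefore hinges on the fact that all such ``sliver'' basis functions are non‑vanishing at the T‑junction, hence primal, hence their coefficients in $u^{(k,\ell)}$ coincide with those of $u^{(\ell)}$ — this is precisely what is charged to the term $|u^{(\ell)}|_{H^{1/2}(\partial\Omega^{(\ell)})}$ on the right‑hand side — together with organising the triangle splittings so that only $\|u^{(k,\ell)}-u^{(k)}\|_{L_2(\Gamma^{(k,\ell)})}$, not an uncontrolled quantity, survives. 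Throughout, one must use each inverse, extension and embedding estimate in its $p$‑explicit form so that no spurious factor of $p$ or $\log(H_k/h_k)$ enters, which is delicate in the vertex region; this is the point where I would expect the proof to require the most care.
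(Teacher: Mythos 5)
Your opening steps (triangle inequality, monotonicity of the $H^{1/2}$ double integral, the elementary far-field bound) are fine, and your structural observation is essentially the right mechanism: every non-primal basis function of $W^{(k,\ell)}$ vanishes at both endpoints of $\Gamma^{(k,\ell)}$, hence has its support inside $\overline{\Gamma^{(k,\ell)}}$, so $\rho:=u^{(k,\ell)}-u^{(\ell)}|_{\Gamma^{(k,\ell)}}$ is a patch-$\ell$ spline supported in $\Gamma^{(k,\ell)}$; this is the same ``fat vertex'' fact the paper exploits (there in the form $u^{(k,\ell)}=u^{(\ell)}$ on the end slivers). However, the way you assemble the estimate has a genuine gap, in two places. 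First, the claim that the patch jump $u^{(k)}-u^{(\ell)}$ vanishes at the endpoints of $\Gamma^{(k,\ell)}$ for $u\in\widetilde W$ is false: the primal constraints identify the coefficients of $u^{(\ell)}$ with those of its artificial-interface copies $u^{(k,\ell)}$, and of $u^{(k)}$ with $u^{(\ell,k)}$, but they never couple $u^{(k)}$ to $u^{(\ell)}$ directly, and in a dG discretization the physical jump at a vertex is in general nonzero. Second, and decisively: after applying the inverse estimate $|\rho|_{H^{1/2}}^2\lesssim (p^2/h_{k\ell})\|\rho\|_{L_2(\Gamma^{(k,\ell)})}^2$ and trading $\|\rho\|_{L_2}\le\|w\|_{L_2}+\|u^{(k)}-u^{(\ell)}\|_{L_2}$, you are left with the term $(p^2/h_{k\ell})\,\|u^{(k)}-u^{(\ell)}\|_{L_2(\Gamma^{(k,\ell)})}^2$, and this is \emph{not} bounded by the right-hand side of the lemma: a (fractional) Poincar\'e argument costs a factor $|\Gamma^{(k,\ell)}|\eqsim H_k$, giving $p^2H_k/h_{k\ell}$ rather than a constant, and a concrete choice such as $u^{(\ell)}\equiv 0$, $u^{(k,\ell)}=\rho_0$ a fixed $O(1)$ bump supported strictly inside $\Gamma^{(k,\ell)}$ (its fat-vertex coefficients vanish, so $u\in\widetilde W$), and $u^{(k)}$ an extension of $\rho_0$ into $\Omega^{(k)}$ makes your intermediate quantity blow up like $p^2H_k/h_{k\ell}$ while both sides of the lemma stay $O(1)$. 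So the chain of inequalities cannot close as proposed.

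For comparison, the paper avoids ever producing an $L_2$ norm of the physical jump (or of $\rho$) scaled by $p^2/h_{k\ell}$: it applies the interpolation estimate $|v|_{H^{1/2}}^2\lesssim\|v\|_{L_2}\|v\|_{H^1}$ to $v=u^{(k)}-u^{(k,\ell)}$, so the $L_2$ factor is always the admissible quantity $\|u^{(k)}-u^{(k,\ell)}\|_{L_2(\Gamma^{(k,\ell)})}$; the $H^1$ factor is split at the extreme patch-$\ell$ breakpoints $\widetilde\zeta_1,\widetilde\zeta_2$ inside $\Gamma^{(k,\ell)}$, the end slivers are handled by precisely your identity $u^{(k,\ell)}=u^{(\ell)}$ there, the $H^1$ terms are converted back to $H^{1/2}$ by $p/\widehat h^{1/2}$ inverse estimates, and the proof closes with Young's inequality, absorbing $\tfrac12|u^{(k,\ell)}|_{H^{1/2}(\Gamma^{(k,\ell)})}^2$ into the left-hand side. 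If you want to rescue your variant, you must keep the $L_2$ factor attached to $u^{(k)}-u^{(k,\ell)}$ throughout (e.g.\ by the same $L_2$--$H^1$ interpolation device) instead of transferring the inverse inequality onto $\rho$ in its $H^{1/2}$-versus-$L_2$ form.
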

\begin{proof}
	The triangle inequality yields
	\begin{equation}\label{eq:dG:si:estimate*}
		\begin{aligned}
			|u^{(k,\ell)}|_{H^{1/2}{(\Gamma^{(k,\ell)})}}^2
			&\leq 
			2|u^{(k)}|_{H^{1/2}{(\Gamma^{(k,\ell)})}}^2
			+
			2|u^{(k)} - u^{(k,\ell)}|_{H^{1/2}{(\Gamma^{(k,\ell)})}}^2.
		\end{aligned}
	\end{equation}
	To estimate $|u^{(k)} - u^{(k,\ell)}|_{H^{1/2}{(\Gamma^{(k,\ell)})}}$, we use the equivalence of the norms on the physical and the parameter domain,~\cite[Lemma 1]{SchneckenleitnerTakacs:2020}, and interpolation, cf.~\cite[Theorem 5.2, eq. (3)]{AdamsFournier:2003}, to obtain 
	\begin{equation}\label{eq:dG:si:estimate1}
		\begin{aligned}
			&|u^{(k)} - u^{(k,\ell)}|^2_{H^{1/2}{(\Gamma^{(k,\ell)})}}
			\eqsim
			|\widehat{u}^{(k)} - \widehat{u}^{(k,\ell)}|^2_{H^{1/2}{(\widehat{\Gamma}^{(k,\ell)})}}\\
			& \qquad \lesssim 
			\|\widehat{u}^{(k)} - \widehat{u}^{(k,\ell)}\|_{L_{2}{(\widehat{\Gamma}^{(k,\ell)})}}
			\|\widehat{u}^{(k)} - \widehat{u}^{(k,\ell)}\|_{H^{1}{(\widehat{\Gamma}^{(k,\ell)})}}\\
			& \qquad \eqsim
			\|\widehat{u}^{(k)} - \widehat{u}^{(k,\ell)}\|^2_{L_{2}{(\widehat{\Gamma}^{(k,\ell)})}} + 
			\|\widehat{u}^{(k)} - \widehat{u}^{(k,\ell)}\|_{L_{2}{(\widehat{\Gamma}^{(k,\ell)})}}
			|\widehat{u}^{(k)} - \widehat{u}^{(k,\ell)}|_{H^{1}{(\widehat{\Gamma}^{(k,\ell)})}} \\
			& \qquad \leq
			\frac{p^2}{\widehat{h}_{k\ell}}
			\|\widehat{u}^{(k)} - \widehat{u}^{(k,\ell)}\|^2_{L_{2}{(\widehat{\Gamma}^{(k,\ell)})}} + 
			\|\widehat{u}^{(k)} - \widehat{u}^{(k,\ell)}\|_{L_{2}{(\widehat{\Gamma}^{(k,\ell)})}}
			|\widehat{u}^{(k)} - \widehat{u}^{(k,\ell)}|_{H^{1}{(\widehat{\Gamma}^{(k,\ell)})}}.
		\end{aligned}
	\end{equation}
	Next, we rotate the patches such that $\widehat{\Gamma}^{(k,\ell)}$ is $(a_1,a_2) \times \{0\}$. We define 
	the function $\widetilde{u}^{(k,\ell)}:=\widehat{u}^{(k,\ell)}(\cdot,0)$ and we denote by $\widetilde{\zeta}_1$ and $\widetilde{\zeta}_2$ the smallest and largest breakpoints, respectively, corresponding to the basis $\widehat{\Phi}^{(k,\ell)}$ such that $\widetilde{\zeta}_1 \geq a_1$ and $\widetilde{\zeta}_2 \leq a_2$.
	Using the triangle inequality, we estimate $|\widehat{u}^{(k)} - \widehat{u}^{(k,\ell)}|_{H^{1}{(\widehat{\Gamma}^{(k,\ell)})}}$ as 
	\[
	\begin{aligned}
		&|\widehat{u}^{(k)} - \widehat{u}^{(k,\ell)}|_{H^{1}{(\widehat{\Gamma}^{(k,\ell)})}} \\
		&\qquad \lesssim 	
		|\widehat{u}^{(k)} |_{H^1{(\widehat{\Gamma}^{(k,\ell)})}} 
		+ 
		|\widetilde{u}^{(k,\ell)} |_{H^{1}{((a_1,a_2)})} \\
		&\qquad \lesssim 
		|\widehat{u}^{(k)} |_{H^{1}{(\widehat{\Gamma}^{(k,\ell)})}}
		+
		|\widetilde{u}^{(k,\ell)} |_{H^{1}{((\widetilde{\zeta}_1,\widetilde{\zeta}_2)})}
		+ 
		|\widetilde{u}^{(k,\ell)} |_{H^{1}{((a_1,a_2)\backslash (\widetilde{\zeta}_1,\widetilde{\zeta}_2)})}.	
	\end{aligned}
	\]
	On $(\widetilde{\zeta}_1, \widetilde{\zeta}_2)$, we use~\cite[Proposition 2.2]{Heuer:2014} and apply an inverse inequality of~\cite[Lemma 4.3]{SchneckenleitnerTakacs:2019}. On $(a_1,a_2)\backslash (\widetilde{\zeta}_1,\widetilde{\zeta}_2)$, we use the fact that $u \in \widetilde{W}$. Hence, $\widetilde{u}^{(k,\ell)} = \widetilde{u}^{(\ell)}$ on $(a_1,a_2)\backslash (\widetilde{\zeta}_1,\widetilde{\zeta}_2)$ and we estimate 
	\begin{equation}\label{eq:dG:si:estimate:H1}
		\begin{aligned}
			&|\widehat{u}^{(k)} - \widehat{u}^{(k,\ell)}|_{H^{1}{(\widehat{\Gamma}^{(k,\ell)})}} \\
			& \quad \lesssim 
			|\widehat{u}^{(k)} |_{H^{1}{(\partial \widehat{\Omega})}}
			+
			\frac{p}{(\widehat{h}_{k\ell})^{1/2}}
			|\widehat{u}^{(k,\ell)} |_{H^{1/2}{(\widehat{\Gamma}^{(k,\ell)})})}
			+ 
			|\widehat{u}^{(\ell)} |_{H^{1}{(\partial \widehat{\Omega})}}	\\
			& \quad \lesssim 
			\frac{p}{(\widehat{h}_{k\ell})^{1/2}}
			\left(
			|\widehat{u}^{(k)} |_{H^{1/2}{(\partial \widehat{\Omega})}}
			+
			|\widehat{u}^{(k,\ell)} |_{H^{1/2}{(\widehat{\Gamma}^{(k,\ell)})})}
			+ 
			|\widehat{u}^{(\ell)} |_{H^{1/2}{(\partial \widehat{\Omega})}}	
			\right). 
		\end{aligned}
	\end{equation}
	We insert~\eqref{eq:dG:si:estimate:H1} into~\eqref{eq:dG:si:estimate1} and further into~\eqref{eq:dG:si:estimate*}, apply the norm equivalence between the parameter and physical domain,~\cite[Lemma 1]{SchneckenleitnerTakacs:2020}, 
	and denote by $c_1, c_2 > 0$ the hidden constants in the estimate to get
	\begin{equation*}
		\begin{aligned}
			|u^{(k,\ell)}|^2_{H^{1/2}{(\Gamma^{(k,\ell)})}}
			& \leq
			2 |u^{(k)}|_{H^{1/2}(\Gamma^{(k,\ell)})}^2 + 
			\frac{c_1 p^2}{h_{k\ell}}\|u^{(k)} - u^{(k,\ell)}\|^2_{L_{2}{(\Gamma^{(k,\ell)})}} 
			\\ & \qquad + 
			\frac{c_2 p}{(h_{k\ell})^{1/2}} 
			\|u^{(k)} - u^{(k,\ell)}\|_{L_{2}{(\Gamma^{(k,\ell)})}} \\
			& \qquad \qquad 
			\left(
			|{u}^{(k)} |_{H^{1/2}{(\partial \Omega^{(k)}})}
			+
			|{u}^{(k,\ell)} |_{H^{1/2}{(\Gamma^{(k,\ell)})}}
			+ |{u}^{(\ell)} |_{H^{1/2}{(\partial \Omega^{(\ell)})}}	
			\right).
		\end{aligned}
	\end{equation*}
	Using $ab \leq a^2 + b^2/4$ and $(a+b)^2 \leq 2a^2 + 2b^2$,
	we obtain
	\begin{equation*}
		\begin{aligned}
			|u^{(k,\ell)}|^2_{H^{1/2}{(\Gamma^{(k,\ell)})}}
			&\leq
			2 |u^{(k)}|_{H^{1/2}{(\Gamma^{(k,\ell)})}}^2 +
			\frac{(c_1+c_2^2) p^2}{h_{k\ell}}\|u^{(k)} - u^{(k,\ell)}\|^2_{L_{2}{(\Gamma^{(k,\ell)})}} \\
			& \qquad +
			\frac{1}{2} |{u}^{(k,\ell)} |^2_{H^{1/2}{(\Gamma^{(k,\ell)})}}
			+
			|{u}^{(k)} |^2_{H^{1/2}{(\partial \Omega^{(k)}})}
			+ |{u}^{(\ell)} |^2_{H^{1/2}{(\partial \Omega^{(\ell)})}}.	
		\end{aligned}
	\end{equation*}
	We subtract $\frac{1}{2}|u^{(k,\ell)}|^2_{H^{1/2}(\Gamma^{(k,\ell)})}$ from the equation to get statement of the lemma.
\end{proof}

\begin{lemma}\label{lem:linf:on:artif:edge}  
	Let $u \in \widetilde{W}$. Then, 
	\[
	\begin{aligned}
		&|u^{(k,\ell)}|_{L_\infty^0(\Gamma^{(k,\ell)})}^2  \\
		& \quad \lesssim 
		| u^{(k)} |_{L_\infty^0(\Gamma^{(k,\ell)})}^2 +
		| u^{(k)} |_{H^{1/2}(\partial \Omega^{(k)})}^2 + 
		| u^{(\ell)} |_{H^{1/2}(\partial \Omega^{(\ell)})}^2 +
		\frac{p^2}{h_{k\ell}}\| u^{(k)} - u^{(k,\ell)} \|_{L_2(\Gamma^{(k,\ell)})}^2
	\end{aligned}
	\]
	holds.
\end{lemma}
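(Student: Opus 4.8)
The plan is to mirror the proof of Lemma~\ref{lem:hhalf:on:artif:edge}, simply replacing the $H^{1/2}$-seminorm of the jump $u^{(k)}-u^{(k,\ell)}$ by its $L_\infty^0$-seminorm and recycling the interior $H^1$-estimate \eqref{eq:dG:si:estimate:H1} that was already established there. First I would note that $|\cdot|_{L_\infty^0(\Gamma^{(k,\ell)})}$ is a seminorm, so the triangle inequality together with $(a+b)^2\le 2a^2+2b^2$ gives
\[
|u^{(k,\ell)}|_{L_\infty^0(\Gamma^{(k,\ell)})}^2 \le 2\,|u^{(k)}|_{L_\infty^0(\Gamma^{(k,\ell)})}^2 + 2\,|u^{(k)}-u^{(k,\ell)}|_{L_\infty^0(\Gamma^{(k,\ell)})}^2,
\]
which reduces the claim to bounding the jump term by the remaining right-hand-side contributions.

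Next I would pass to the parameter domain and apply a one-dimensional Agmon/Gagliardo--Nirenberg inequality to $\widehat e := \widehat u^{(k)}-\widehat u^{(k,\ell)}$ on $\widehat\Gamma^{(k,\ell)}$. Since $\widehat\Gamma^{(k,\ell)}=G_k^{-1}(\Gamma^{(k,\ell)})$ is an edge segment of $\widehat\Omega=(0,1)^2$ whose length is bounded below by the minimal-interface assumption~\eqref{eq:min:interface}, it has $O(1)$ length, so after subtracting the mean and absorbing the lower-order term of Agmon's inequality by Poincaré one gets
\[
|\widehat e|_{L_\infty^0(\widehat\Gamma^{(k,\ell)})}^2 \lesssim \|\widehat e\|_{L_2(\widehat\Gamma^{(k,\ell)})}\,|\widehat e|_{H^1(\widehat\Gamma^{(k,\ell)})}
\]
with a length-independent constant. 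I would then insert the bound \eqref{eq:dG:si:estimate:H1}, which was derived precisely for this $\widehat e$, i.e.\ $|\widehat e|_{H^1(\widehat\Gamma^{(k,\ell)})}\lesssim \widehat h_{k\ell}^{-1/2} p\,\big(|\widehat u^{(k)}|_{H^{1/2}(\partial\widehat\Omega)}+|\widehat u^{(k,\ell)}|_{H^{1/2}(\widehat\Gamma^{(k,\ell)})}+|\widehat u^{(\ell)}|_{H^{1/2}(\partial\widehat\Omega)}\big)$, and split the resulting product by Young's inequality $ab\le\tfrac12 a^2+\tfrac12 b^2$ to separate $\tfrac{p^2}{\widehat h_{k\ell}}\|\widehat e\|_{L_2(\widehat\Gamma^{(k,\ell)})}^2$ from the $H^{1/2}$-seminorm contributions.

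Finally I would transfer back to the physical domain using the norm equivalences of \cite[Lemma~1]{SchneckenleitnerTakacs:2020}: the $L_\infty^0$- and $H^{1/2}$-seminorms carry over without scaling, while $\tfrac{p^2}{\widehat h_{k\ell}}\|\widehat e\|_{L_2(\widehat\Gamma^{(k,\ell)})}^2 \eqsim \tfrac{p^2}{h_{k\ell}}\|u^{(k)}-u^{(k,\ell)}\|_{L_2(\Gamma^{(k,\ell)})}^2$, exactly as in the closing step of the proof of Lemma~\ref{lem:hhalf:on:artif:edge}. Using $|u^{(k)}|_{H^{1/2}(\Gamma^{(k,\ell)})}\le|u^{(k)}|_{H^{1/2}(\partial\Omega^{(k)})}$ and absorbing the leftover artificial-interface term $|u^{(k,\ell)}|_{H^{1/2}(\Gamma^{(k,\ell)})}^2$ via Lemma~\ref{lem:hhalf:on:artif:edge}, then combining with the splitting from the first step, yields the stated estimate. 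I expect the main obstacle to be purely bookkeeping: invoking \eqref{eq:dG:si:estimate:H1} (which appears inside, rather than as the conclusion of, the earlier proof) and tracking the parameter-to-physical rescaling of $\widehat h_{k\ell}$ versus $h_{k\ell}$ and of the various norms so that no stray factor $H_k/h_{k\ell}$ survives — which is exactly why one must work on the $O(1)$-sized parameter interface and cannot use the cruder bound $|e|_{L_\infty^0(I)}^2\le|I|\,|e|_{H^1(I)}^2$.
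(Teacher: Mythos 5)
Your proposal is correct and follows essentially the same route as the paper's proof: triangle inequality to isolate the jump, an Agmon-type bound $|\widehat e|_{L_\infty^0}^2\lesssim\|\widehat e\|_{L_2}\|\widehat e\|_{H^1}$ on the parameter domain (the paper cites Lemma~5 of \cite{SchneckenleitnerTakacs:2020} for this), reuse of the intermediate estimate~\eqref{eq:dG:si:estimate:H1}, Young-type inequalities and the norm equivalence to return to the physical domain, and finally Lemma~\ref{lem:hhalf:on:artif:edge} to absorb the leftover $|u^{(k,\ell)}|_{H^{1/2}(\Gamma^{(k,\ell)})}^2$ term. The only cosmetic difference is that you re-derive the Agmon inequality by hand rather than citing it, which does not change the argument.
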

\begin{proof}
	Using the triangle inequality, we obtain 
	\begin{equation}\label{eq:linf:art:int1}
		| u^{(k,\ell)} |_{L_\infty^0(\Gamma^{(k,\ell)})}^2
		\lesssim 
		| u^{(k)} |_{L_\infty^0(\Gamma^{(k,\ell)})}^2 + 
		| u^{(k)} - u^{(k,\ell)} |_{L_\infty^0(\Gamma^{(k,\ell)})}^2.
	\end{equation}
	We apply~\cite[Lemma 5]{SchneckenleitnerTakacs:2020} and the norm equivalence,~\cite[Lemma 1]{SchneckenleitnerTakacs:2020}, to the difference $| u^{(k)} - u^{(k,\ell)} |_{L_\infty^0(\Gamma^{(k,\ell)})}^2$ to get
	\[
	\begin{aligned}
		& | u^{(k)} - u^{(k,\ell)} |_{L_\infty^0(\Gamma^{(k,\ell)})}^2
		=
		| \widehat{u}^{(k)} - \widehat{u}^{(k,\ell)} |_{L_\infty^0(\widehat{\Gamma}^{(k,\ell)})}^2\\
		& \qquad \lesssim 
		\| \widehat{u}^{(k)} - \widehat{u}^{(k,\ell)} \|_{L_2(\widehat{\Gamma}^{(k,\ell)})}
		\| \widehat{u}^{(k)} - \widehat{u}^{(k,\ell)} \|_{H^1(\widehat{\Gamma}^{(k,\ell)})} \\
		& \qquad \lesssim 
		\| \widehat{u}^{(k)} - \widehat{u}^{(k,\ell)} \|^2_{L_2(\widehat{\Gamma}^{(k,\ell)})}
		+
		\| \widehat{u}^{(k)} - \widehat{u}^{(k,\ell)} \|_{L_2(\widehat{\Gamma}^{(k,\ell)})} 
		| \widehat{u}^{(k)} - \widehat{u}^{(k,\ell)} |_{H^1(\widehat{\Gamma}^{(k,\ell)})}\\
		& \qquad \leq 
		\frac{p^2}{\widehat{h}_{k\ell}} 
		\| \widehat{u}^{(k)} - \widehat{u}^{(k,\ell)} \|^2_{L_2(\widehat{\Gamma}^{(k,\ell)})}
		+
		\| \widehat{u}^{(k)} - \widehat{u}^{(k,\ell)} \|_{L_2(\widehat{\Gamma}^{(k,\ell)})} 
		| \widehat{u}^{(k)} - \widehat{u}^{(k,\ell)} |_{H^1(\widehat{\Gamma}^{(k,\ell)})}.
	\end{aligned}
	\]
	We use~\eqref{eq:dG:si:estimate:H1} to obtain
	\[
	\begin{aligned}
		|\widehat{u}^{(k)} - \widehat{u}^{(k,\ell)}|_{H^{1}{(\widehat{\Gamma}^{(k,\ell)})}}
		&\lesssim
		\frac{p}{(\widehat{h}_{k\ell})^{1/2}}
		\left(
		|\widehat{u}^{(k)} |_{H^{1/2}{(\partial \widehat{\Omega}})}
		+
		|\widehat{u}^{(k,\ell)} |_{H^{1/2}{(\widehat{\Gamma}^{(k,\ell)})}}
		+ |\widehat{u}^{(\ell)} |_{H^{1/2}{(\partial \widehat{\Omega})}}	
		\right).
	\end{aligned}
	\]
	An application of the norm equivalence,~\cite[Lemma 1]{SchneckenleitnerTakacs:2020}, yields the estimate
	\[
	\begin{aligned}
		| u^{(k)} - u^{(k,\ell)} |_{L_\infty^0(\Gamma^{(k,\ell)})}^2
		&\lesssim
		\frac{p^2}{h_{k\ell}} 
		\| u^{(k)} - u^{(k,\ell)} \|^2_{L_2(\Gamma^{(k,\ell)})}+
		\frac{p}{(h_{k\ell})^{1/2}}
		\| u^{(k)} - u^{(k,\ell)} \|_{L_2(\Gamma^{(k,\ell)})} \\
		& \qquad 
		\left(
		|{u}^{(k)} |_{H^{1/2}{(\partial \Omega^{(k)}})} +
		|{u}^{(k,\ell)} |_{H^{1/2}{({\Gamma}^{(k,\ell)})}} + |{u}^{(\ell)} |_{H^{1/2}{(\partial \Omega^{(\ell)})}}
		\right).
	\end{aligned}
	\]
	Using $a(b+c) \lesssim a^2 + b^2 + c^2$ and $(a+b)^2 \lesssim a^2 + b^2$, yield
	\[
	\begin{aligned}
		| u^{(k)} - u^{(k,\ell)} |_{L_\infty^0(\Gamma^{(k,\ell)})}^2
		&\lesssim 
		\frac{p^2}{h_{k\ell}} 
		\| u^{(k)} - u^{(k,\ell)} \|^2_{L_2(\Gamma^{(k,\ell)})} 
		+
		|{u}^{(k)} |^2_{H^{1/2}{(\partial \Omega^{(k)}})}
		\\ & \qquad +
		|{u}^{(k,\ell)} |^2_{H^{1/2}{(\Gamma^{(k,\ell)})}}
		+ |{u}^{(\ell)} |^2_{H^{1/2}{(\partial \Omega^{(\ell)})}}.
	\end{aligned}
	\]
	Lemma~\ref{lem:hhalf:on:artif:edge} and \eqref{eq:linf:art:int1} finish the proof of this lemma.
\end{proof}

Before we give a proof of the main theorem, we estimate the sum of
the corresponding seminorms over all patches.
\begin{lemma}\label{lem:4:10}
	Let $u$ and $w$ be as in Lemma~\ref{lem:bbt} and
	assume that~\eqref{eq:neighbors} holds. Then, we have
	\[
	\begin{aligned}
		&\sum_{k=1}^K  \sum_{\ell\in \mathcal{N}_\Gamma(k)}\alpha_k \left(
		|w^{(k)}|_{H^{1/2}(\Gamma^{(k,\ell)})}^2
		+
		|w^{(k)}|_{L^0_\infty(\Gamma^{(k,\ell)})}^2		
		\right)
		\\
		&\qquad\lesssim
		\sum_{k=1}^K  \sum_{\ell\in \mathcal{N}_\Gamma(k)} \alpha_k
		\left( |u^{(k)}|_{H^{1/2}(\partial \Omega^{(k)})}^2 
		+|u^{(k)}|_{L^0_\infty(\Gamma^{(k,\ell)})}^2 
		+ 
		\frac{p^2}{h_{k\ell}} \| u^{(k)} - u^{(k,\ell)}\|^2_{L_2(\Gamma^{(k,\ell)})}
		\right).
	\end{aligned}
	\]
\end{lemma}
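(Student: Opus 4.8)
The plan is to chain together the three estimates already at hand — the pointwise bounds~\eqref{eq:bbt}, Lemma~\ref{lem:hhalf:on:artif:edge}, and Lemma~\ref{lem:linf:on:artif:edge} — while tracking the diffusion coefficients carefully. Fix a patch $\Omega^{(k)}$ and a neighbor $\ell\in\mathcal N_\Gamma(k)$ and write $|v|_\star^2:=|v|_{H^{1/2}(\Gamma^{(k,\ell)})}^2+|v|_{L_\infty^0(\Gamma^{(k,\ell)})}^2$ for brevity. Multiplying the second and third lines of~\eqref{eq:bbt} by $\alpha_k$ gives
\[
\alpha_k\Big(|w^{(k)}|_{H^{1/2}(\Gamma^{(k,\ell)})}^2+|w^{(k)}|_{L_\infty^0(\Gamma^{(k,\ell)})}^2\Big)
\;\lesssim\;
\frac{\alpha_k\alpha_\ell^2}{(\alpha_k+\alpha_\ell)^2}\Big(|u^{(k)}|_\star^2+|u^{(\ell,k)}|_\star^2\Big).
\]
The elementary inequalities $\alpha_k\alpha_\ell^2/(\alpha_k+\alpha_\ell)^2\le\alpha_k$ and $\alpha_k\alpha_\ell^2/(\alpha_k+\alpha_\ell)^2\le\alpha_\ell$ are what let the coefficients on the right-hand side come out correctly.

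For the $u^{(k)}$-part I would bound the prefactor by $\alpha_k$ and use monotonicity of the $H^{1/2}$-seminorm under restriction, $|u^{(k)}|_{H^{1/2}(\Gamma^{(k,\ell)})}\le|u^{(k)}|_{H^{1/2}(\partial\Omega^{(k)})}$; the $L_\infty^0(\Gamma^{(k,\ell)})$-contribution is already one of the terms on the right-hand side. For the $u^{(\ell,k)}$-part I would bound the prefactor by $\min\{\alpha_k,\alpha_\ell\}$ and then apply Lemmas~\ref{lem:hhalf:on:artif:edge} and~\ref{lem:linf:on:artif:edge} with the roles of $k$ and $\ell$ swapped (admissible since $k\in\mathcal N_\Gamma(\ell)$). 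This produces $|u^{(\ell)}|_{H^{1/2}(\partial\Omega^{(\ell)})}^2$, $|u^{(\ell)}|_{L_\infty^0(\Gamma^{(\ell,k)})}^2$ and $\frac{p^2}{h_{k\ell}}\|u^{(\ell)}-u^{(\ell,k)}\|_{L_2(\Gamma^{(k,\ell)})}^2$, to each of which I attach the factor $\alpha_\ell$, together with $|u^{(k)}|_{H^{1/2}(\partial\Omega^{(k)})}^2$, to which I attach the factor $\alpha_k$; using $\min\{\alpha_k,\alpha_\ell\}\le\alpha_\ell$ and $\min\{\alpha_k,\alpha_\ell\}\le\alpha_k$ respectively, each of the resulting weighted quantities is precisely a summand of the claimed right-hand side, sitting in the $(k,\ell)$- or in the $(\ell,k)$-slot of its double sum.

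Finally I would sum the per-interface estimate over all $k$ and all $\ell\in\mathcal N_\Gamma(k)$. Every quantity generated above is, up to the constant hidden in $\lesssim$, a summand of the right-hand side, and a given summand is charged only by the interface pairs $(k,\ell)$ and $(\ell,k)$ — hence at most a bounded number of times, consistent with the neighbor bound~\eqref{eq:neighbors} — so the accumulated estimate remains $\lesssim$ the right-hand side. The only place that needs care, and the main obstacle, is exactly this bookkeeping: the contributions inherited from the neighbor $\Omega^{(\ell)}$ must be charged to $\alpha_\ell$-weighted right-hand side terms and the ``local'' ones to $\alpha_k$-weighted terms, which is what the two-sided estimate $\alpha_k\alpha_\ell^2/(\alpha_k+\alpha_\ell)^2\le\min\{\alpha_k,\alpha_\ell\}$ makes possible; the rest is a routine concatenation of the three preceding results with the elementary inequalities already used in their proofs.
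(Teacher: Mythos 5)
Your argument is correct and follows essentially the same route as the paper's proof: start from the weighted bounds~\eqref{eq:bbt}, treat the artificial-interface contributions $u^{(\ell,k)}$ via Lemmas~\ref{lem:hhalf:on:artif:edge} and~\ref{lem:linf:on:artif:edge} with the roles of $k$ and $\ell$ swapped, and absorb the coefficient weight through $\frac{\alpha_k\alpha_\ell^2}{(\alpha_k+\alpha_\ell)^2}\le\min\{\alpha_k,\alpha_\ell\}$ together with the symmetry $\ell\in\mathcal N_\Gamma(k)\Leftrightarrow k\in\mathcal N_\Gamma(\ell)$ and the neighbor bound~\eqref{eq:neighbors} for the final reindexing. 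No gaps; your explicit bookkeeping of which terms are charged to the $(k,\ell)$- versus $(\ell,k)$-slot is exactly the step the paper leaves implicit.
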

\begin{proof}
	Within this proof, all norms refer to $\Gamma^{(k,\ell)}=
	\Gamma^{(\ell,k)}$. \eqref{eq:bbt} and~\eqref{eq:neighbors} 
	yield
	\[
	\begin{aligned}
		&\mathcal{A}:=\sum_{k=1}^K  \sum_{\ell\in \mathcal{N}_\Gamma(k)} \alpha_k
		\left(
		|w^{(k)}|_{H^{1/2}(\Gamma^{(k,\ell)})}^2
		+ |w^{(k)}|^2_{L^0_{\infty}(\Gamma^{(k,\ell)})}
		\right)
		\\&\qquad\qquad \lesssim
		\sum_{k=1}^K \sum_{\ell\in \mathcal{N}_\Gamma(k)}
		\frac{\alpha_k \alpha_\ell^2}{(\alpha_k + \alpha_\ell)^2}
		\Big(
		|u^{(k)}|_{H^{1/2}(\Gamma^{(k,\ell)})}^2
		+
		|u^{(\ell,k)}|_{H^{1/2}(\Gamma^{(k,\ell)})}^2
		\\ & \qquad\qquad\qquad\qquad +
		|u^{(k)}|_{L^0_\infty(\Gamma^{(k,\ell)})}^2
		+
		|u^{(\ell,k)}|_{L^0_\infty(\Gamma^{(k,\ell)})}^2
		\Big).
	\end{aligned}
	\]
	Using Lemmas~\ref{lem:hhalf:on:artif:edge} and~\ref{lem:linf:on:artif:edge}, we obtain
	\[
	\begin{aligned}
		&\mathcal{A} \lesssim
		\sum_{k=1}^K \sum_{\ell\in \mathcal{N}_\Gamma(k)}
		\frac{\alpha_k \alpha_\ell^2}{(\alpha_k + \alpha_\ell)^2}
		\Big(
		|u^{(k)}|_{H^{1/2}(\partial \Omega^{(k)})}^2
		+
		|u^{(\ell)}|_{H^{1/2}(\partial \Omega^{(\ell)})}^2
		\\ & \qquad\qquad +
		|u^{(k)}|_{L^0_\infty(\Gamma^{(k,\ell)})}^2
		+
		|u^{(\ell)}|_{L^0_\infty(\Gamma^{(k,\ell)})}^2
		+\frac{p^2}{h_{k\ell}} \| u^{(\ell)} - u^{(\ell,k)}\|^2_{L_2(\Gamma^{(k,\ell)})}
		\Big).
	\end{aligned}
	\]	
	The estimate $\frac{\alpha_k \alpha_\ell^2}{(\alpha_k + \alpha_\ell)^2} \leq \min\{\alpha_k, \alpha_\ell\}$,
	and $\ell\in\mathcal{N}_{\Gamma}(k) \Leftrightarrow k\in\mathcal{N}_{\Gamma}(\ell)$ yield the desired estimate.
\end{proof}

\begin{proof}[Proof of Theorem~\ref{thrm:fin}]
	The idea of the proof is to use \cite[Theorem~22]{MandelDohrmannTezaur:2005a},
	which states that 
	\begin{equation}\label{eq:MandelDohrmannTezaur}
		\kappa(M_{\mathrm{sD}} \, F) \le \sup_{u \in  \widetilde W }
		\frac{ \| B_D^\top B_\Gamma \underline u \|_S^2 }{ \| \underline u \|_S^2 },
	\end{equation}
	where $\underline u$ is the coefficient vector associated
	to the function $u=(u_e^{(1)},\cdots,u_e^{(K)})
	=((u^{(1)},(u^{(1,\ell)})_{\ell\in\mathcal N_\Gamma(k)}),\cdots)$.
	So, let $u$ be arbitrary but fixed
	and let the function $w=(w_e^{(1)},\cdots,w_e^{(K)})
	=((w^{(1)},(w^{(1,\ell)})_{\ell\in\mathcal N_\Gamma(k)}),\cdots)$
	with coefficient vector $\underline w$ be such that
	$
	\underline w = B_D^\top B_\Gamma \underline u
	$.
	The Schur complement norm of the function $w$ is equivalent to the dG norm of its discrete harmonic extension, cf.~\cite{SchneckenleitnerTakacs:2020}. This means that 
	\begin{equation}\label{eq:thetwosums}
		\begin{aligned}
			\| B_D^\top B_\Gamma \underline u \|_S^2 = \|  \underline w \|_S^2
			\eqsim \sum_{k=1}^{K} \alpha_k | \mathcal{H}_h^{(k)} w^{(k)} |_{H^{1}(\Omega^{(k)})}^2
			+ \sum_{k=1}^{K} \sum_{\ell\in\mathcal{N}_\Gamma(k)}
			\alpha_k \frac{\delta p^2}{h_{kl}}
			\| w^{(k)} -w^{(k,\ell)} \|_{L_2(\Gamma^{(k,\ell)})}^2,
		\end{aligned}
	\end{equation}
	where $\mathcal{H}_h^{(k)}: W^{(k)} \rightarrow V^{(k)}$ denotes the discrete harmonic extension that minimizes the energy with respect to the bilinear form $a^{(k)}(\cdot, \cdot)$.
	First, we estimate the first sum in~\eqref{eq:thetwosums}.
	\cite[Theorem 4.2]{SchneckenleitnerTakacs:2019} yields
	\[
	\sum_{k=1}^{K} \alpha_k |\mathcal{H}_h^{(k)} w^{(k)} |_{H^1(\Omega^{(k)})}^2
	\lesssim
	p\sum_{k=1}^{K} \alpha_k | w^{(k)} |_{H^{1/2}(\partial \Omega^{(k)})}^2.
	\]
	Using \cite[Lemma 4.15]{SchneckenleitnerTakacs:2019} and an analogous estimate for a single edge
	(which depends on~\eqref{eq:min:interface}), we get
	\[
	\begin{aligned}
		\sum_{k=1}^{K} \alpha_k |\mathcal{H}_h^{(k)} w^{(k)} |_{H^1(\Omega^{(k)})}^2 
		&	\lesssim
		p\sum_{k=1}^{K}  \sum_{\ell \in \mathcal N_\Gamma(k)}\alpha_k \left( | w^{(k)} |_{H^{1/2}(\Gamma^{(k,\ell)})}^2 + 
		\Lambda
		|w^{(k)}|_{L_\infty^0(\Gamma^{(k,\ell)})}^2 
		\right),
	\end{aligned}	
	\]
	where $\Lambda := 1 + \log p + \max_{k = 1,\dots,K} \log {\frac{H_k}{h_k}}$.
	Using $\Lambda\ge1$ and Lemma~\ref{lem:4:10}, we obtain further
	\[
	\begin{aligned}
		&\sum_{k=1}^{K} \alpha_k |\mathcal{H}_h^{(k)} w^{(k)} |_{H^1(\Omega^{(k)})}^2 
		\lesssim
		p  \Lambda 
		\sum_{k=1}^K \sum_{\ell\in \mathcal{N}_\Gamma(k)}
		\alpha_k \left( |u^{(k)}|_{H^{1/2}(\partial \Omega^{(k)})}^2 
		+|u^{(k)}|_{L_\infty^0(\Gamma^{(k,\ell)})}^2
		\right)\\
		&\qquad+
		p  \Lambda
		\sum_{k=1}^K \sum_{\ell\in \mathcal{N}_\Gamma(k)}
		\alpha_k \frac{p^2}{h_{k\ell}}\| u^{(k,\ell)} - u^{(k)} \|_{L_2(\Gamma^{(k,\ell)})}^2.
	\end{aligned}
	\]
	Using \cite[Lemma~4.15 and Theorem~4.2]{SchneckenleitnerTakacs:2019}
	and $|\mathcal{N}_\Gamma(k)|\lesssim 1$, we further estimate
	\[
	\begin{aligned}
		\sum_{k=1}^{K} \alpha_k |\mathcal{H}_h^{(k)} w^{(k)} |_{H^1(\Omega^{(k)})}^2 
		& \lesssim p\Lambda
		\sum_{k=1}^K \alpha_k |\mathcal{H}_h^{(k)}u^{(k)}|_{H^{1}(\Omega^{(k)})}^2 
		+ p \Lambda 
		\sum_{k=1}^K \sum_{\ell\in \mathcal{N}_\Gamma(k)}
		\alpha_k |u^{(k)}|_{L_\infty^0(\Gamma^{(k,\ell)})}^2
		\\ & \hspace{-2cm} + p \Lambda
		\sum_{k=1}^K \sum_{\ell\in \mathcal{N}_\Gamma(k)} \alpha_k
		\frac{p^2}{h_{k\ell}}\| u^{(k,\ell)} - u^{(k)} \|_{L_2(\Gamma^{(k,\ell)})}^2 
		.
	\end{aligned}
	\]
	Using \cite[Lemma~4.14]{SchneckenleitnerTakacs:2019}, we get
	further
	\[
	\begin{aligned}
		& \sum_{k=1}^{K} \alpha_k |\mathcal{H}_h^{(k)} w^{(k)} |_{H^1(\Omega^{(k)})}^2
		\lesssim
		p\Lambda
		\sum_{k=1}^K \alpha_k |\mathcal{H}_h^{(k)}u^{(k)}|_{H^{1}(\Omega^{(k)})}^2  \\
		& \quad 
		+ p \Lambda^2 \sum_{k=1}^K \alpha_k \sum_{\ell\in \mathcal{N}_\Gamma(k)}
		\left(
		|\mathcal{H}_h^{(k)}u^{(k)}|_{H^{1}(\Omega^{(k)})}^2 + \inf_{c\in \mathbb R} 
		\|(\mathcal{H}_h^{(k)}u^{(k)}-c) \circ G_k\|_{H^1(\widehat{\Omega})}^2 
		\right)\\
		&\quad + p \Lambda 
		\sum_{k=1}^K \sum_{\ell\in \mathcal{N}_\Gamma(k)} \alpha_k
		\frac{p^2}{h_{k\ell}}\| u^{(k,\ell)} - u^{(k)} \|_{L_2(\Gamma^{(k,\ell)})}^2 
		.
	\end{aligned}
	\]
	The assumption~\eqref{eq:neighbors}, the Poincar{\'e} inequality and the norm equivalence between the parameter and the physical domain,~\cite[Lemma 1]{SchneckenleitnerTakacs:2020},
	yield the estimate 
	\begin{equation}\label{eq:AlgA first}
		\begin{aligned}
			& \sum_{k=1}^{K} \alpha_k |\mathcal{H}_h^{(k)} w^{(k)} |_{H^1(\Omega^{(k)})}^2 \\
			& \quad \lesssim
			p\Lambda^2 
			\sum_{k=1}^K
			\left(
			\alpha_k |\mathcal{H}_h^{(k)}u^{(k)}|_{H^{1}(\Omega^{(k)})}^2 
			+  \sum_{\ell\in \mathcal{N}_\Gamma(k)}
			\alpha_k \frac{p^2}{h_{k\ell}}\| u^{(k,\ell)} - u^{(k)} \|_{L_2(\Gamma^{(k,\ell)})}^2
			\right).
		\end{aligned}
	\end{equation}
	The estimate~\eqref{eq:bbt}, $\frac{\alpha_k\alpha_\ell^2}{(\alpha_k+\alpha_\ell)^2}\le\min\{\alpha_k,\alpha_\ell\}$ and $\ell \in \mathcal{N}_\Gamma(k) \Leftrightarrow k \in \mathcal{N}_\Gamma(\ell)$ immediately yield for the second sum in~\eqref{eq:thetwosums},
	\begin{equation}
		\begin{aligned}\label{eq:second sum}
			\sum_{k=1}^{K}\sum_{\ell \in \mathcal N_\Gamma(k)} \alpha_k \frac{\delta p^2}{h_{k\ell}} 
			\| w^{(k)} - w^{(k,\ell)} \|_{L_2(\Gamma^{(k,\ell)})}^{2} 
			\lesssim 
			\sum_{k=1}^{K}\sum_{\ell \in \mathcal N_\Gamma(k)}
			\alpha_k
			\frac{\delta p^2}{h_{k\ell}}\| u^{(k)} - u^{(k,\ell)} \|_{L_2(\Gamma^{(k,\ell)})}^{2}.
		\end{aligned}
	\end{equation}
	Using the fact that $\Lambda\ge 1$, \eqref{eq:AlgA first} and~\eqref{eq:second sum} to estimate~\eqref{eq:thetwosums}, we obtain
	\[
	\begin{aligned}
		\| B_D^\top B_\Gamma \underline u \|_S^2 &\lesssim
		p \Lambda^2 \sum_{k=1}^{K} \left(
		\alpha_k | \mathcal{H}_h^{(k)} u^{(k)} |_{H^1(\Omega^{(k)})}^2 + 
		\sum_{\ell \in \mathcal N_\Gamma(k)} \alpha_k \frac{\delta p^2}{h_{k\ell}}
		\| u^{(k)} - u^{(k,\ell)} \|_{L_2(\Gamma^{(k,\ell)})}^{2} 
		\right) \\
		&= p \Lambda^2 \| u \|_{d}^2 
		\eqsim p \Lambda^2 \| \underline{u} \|_{S}^2 
	\end{aligned}
	\]
	The combination of this estimate and~\eqref{eq:MandelDohrmannTezaur}
	finishes the proof.
\end{proof}

\mbox{}\\[-2em]

\textbf{Acknowledgments.}
The first author was supported by the Austrian Science Fund (FWF): S117 and 
W1214-04. Also, the second author has received support from the Austrian Science
Fund (FWF): P31048.


%
%
\bibliography{literature}

\end{document}